\documentclass[12pt]{amsart}

%
%
%


\usepackage{etoolbox}

\makeatletter
\let\old@tocline\@tocline
\let\section@tocline\@tocline
\newcommand{\subsection@dotsep}{4.5}
\newcommand{\subsubsection@dotsep}{4.5}
\patchcmd{\@tocline}
  {\hfil}
  {\nobreak
     \leaders\hbox{$\m@th
        \mkern \subsection@dotsep mu\hbox{.}\mkern \subsection@dotsep mu$}\hfill
     \nobreak}{}{}
\let\subsection@tocline\@tocline
\let\@tocline\old@tocline

\patchcmd{\@tocline}
  {\hfil}
  {\nobreak
     \leaders\hbox{$\m@th
        \mkern \subsubsection@dotsep mu\hbox{.}\mkern \subsubsection@dotsep mu$}\hfill
     \nobreak}{}{}
\let\subsubsection@tocline\@tocline
\let\@tocline\old@tocline

\let\old@l@subsection\l@subsection
\let\old@l@subsubsection\l@subsubsection

\def\@tocwriteb#1#2#3{%
  \begingroup
    \@xp\def\csname #2@tocline\endcsname##1##2##3##4##5##6{%
      \ifnum##1>\c@tocdepth
      \else \sbox\z@{##5\let\indentlabel\@tochangmeasure##6}\fi}%
    \csname l@#2\endcsname{#1{\csname#2name\endcsname}{\@secnumber}{}}%
  \endgroup
  \addcontentsline{toc}{#2}%
    {\protect#1{\csname#2name\endcsname}{\@secnumber}{#3}}}%

\newlength{\@tocsectionindent}
\newlength{\@tocsubsectionindent}
\newlength{\@tocsubsubsectionindent}
\newlength{\@tocsectionnumwidth}
\newlength{\@tocsubsectionnumwidth}
\newlength{\@tocsubsubsectionnumwidth}
\newcommand{\settocsectionnumwidth}[1]{\setlength{\@tocsectionnumwidth}{#1}}
\newcommand{\settocsubsectionnumwidth}[1]{\setlength{\@tocsubsectionnumwidth}{#1}}
\newcommand{\settocsubsubsectionnumwidth}[1]{\setlength{\@tocsubsubsectionnumwidth}{#1}}
\newcommand{\settocsectionindent}[1]{\setlength{\@tocsectionindent}{#1}}
\newcommand{\settocsubsectionindent}[1]{\setlength{\@tocsubsectionindent}{#1}}
\newcommand{\settocsubsubsectionindent}[1]{\setlength{\@tocsubsubsectionindent}{#1}}

\renewcommand{\l@section}{\section@tocline{1}{\@tocsectionvskip}{\@tocsectionindent}{}{\@tocsectionformat}}%
\renewcommand{\l@subsection}{\subsection@tocline{2}{\@tocsubsectionvskip}{\@tocsubsectionindent}{}{\@tocsubsectionformat}}%
\renewcommand{\l@subsubsection}{\subsubsection@tocline{3}{\@tocsubsubsectionvskip}{\@tocsubsubsectionindent}{}{\@tocsubsubsectionformat}}%
\newcommand{\@tocsectionformat}{}
\newcommand{\@tocsubsectionformat}{}
\newcommand{\@tocsubsubsectionformat}{}
\expandafter\def\csname toc@1format\endcsname{\@tocsectionformat}
\expandafter\def\csname toc@2format\endcsname{\@tocsubsectionformat}
\expandafter\def\csname toc@3format\endcsname{\@tocsubsubsectionformat}
\newcommand{\settocsectionformat}[1]{\renewcommand{\@tocsectionformat}{#1}}
\newcommand{\settocsubsectionformat}[1]{\renewcommand{\@tocsubsectionformat}{#1}}
\newcommand{\settocsubsubsectionformat}[1]{\renewcommand{\@tocsubsubsectionformat}{#1}}
\newlength{\@tocsectionvskip}
\newcommand{\settocsectionvskip}[1]{\setlength{\@tocsectionvskip}{#1}}
\newlength{\@tocsubsectionvskip}
\newcommand{\settocsubsectionvskip}[1]{\setlength{\@tocsubsectionvskip}{#1}}
\newlength{\@tocsubsubsectionvskip}
\newcommand{\settocsubsubsectionvskip}[1]{\setlength{\@tocsubsubsectionvskip}{#1}}

\patchcmd{\tocsection}{\indentlabel}{\makebox[\@tocsectionnumwidth][l]}{}{}
\patchcmd{\tocsubsection}{\indentlabel}{\makebox[\@tocsubsectionnumwidth][l]}{}{}
\patchcmd{\tocsubsubsection}{\indentlabel}{\makebox[\@tocsubsubsectionnumwidth][l]}{}{}

\newcommand{\@sectypepnumformat}{}
\renewcommand{\contentsline}[1]{%
  \expandafter\let\expandafter\@sectypepnumformat\csname @toc#1pnumformat\endcsname%
  \csname l@#1\endcsname}
\newcommand{\@tocsectionpnumformat}{}
\newcommand{\@tocsubsectionpnumformat}{}
\newcommand{\@tocsubsubsectionpnumformat}{}
\newcommand{\setsectionpnumformat}[1]{\renewcommand{\@tocsectionpnumformat}{#1}}
\newcommand{\setsubsectionpnumformat}[1]{\renewcommand{\@tocsubsectionpnumformat}{#1}}
\newcommand{\setsubsubsectionpnumformat}[1]{\renewcommand{\@tocsubsubsectionpnumformat}{#1}}
\renewcommand{\@tocpagenum}[1]{%
  \hfill {\mdseries\@sectypepnumformat #1}}

\let\oldappendix\appendix
\renewcommand{\appendix}{%
  \leavevmode\oldappendix%
  \addtocontents{toc}{%
    \protect\settowidth{\protect\@tocsectionnumwidth}{\protect\@tocsectionformat\sectionname\space}%
    \protect\addtolength{\protect\@tocsectionnumwidth}{2em}}%
}
\makeatother



\makeatletter
\settocsectionnumwidth{2em}
\settocsubsectionnumwidth{2.5em}
\settocsubsubsectionnumwidth{3em}
\settocsectionindent{1pc}%
\settocsubsectionindent{\dimexpr\@tocsectionindent+\@tocsectionnumwidth}%
\settocsubsubsectionindent{\dimexpr\@tocsubsectionindent+\@tocsubsectionnumwidth}%
\makeatother

\settocsectionvskip{5pt}
\settocsubsectionvskip{0pt}
\settocsubsubsectionvskip{0pt}
    


\settocsectionformat{\bfseries}
\settocsubsectionformat{\mdseries}
\settocsubsubsectionformat{\mdseries}
\setsectionpnumformat{\bfseries}
\setsubsectionpnumformat{\mdseries}
\setsubsubsectionpnumformat{\mdseries}


\let\oldtableofcontents\tableofcontents
\renewcommand{\tableofcontents}{%
  \vspace*{-\linespacing}
  \oldtableofcontents}

\setcounter{tocdepth}{2}


\usepackage{amsmath, amssymb, amsthm, amsfonts, marginnote}
\usepackage{mathtools}
\usepackage{tikz-cd} 



\input xy
\xyoption{all}

\usepackage[bookmarks=true, bookmarksopen=true,%
bookmarksdepth=3,bookmarksopenlevel=2,%
colorlinks=true,%
linkcolor=blue,%
citecolor=blue,%
filecolor=blue,%
menucolor=blue,%
urlcolor=blue]{hyperref}



\theoremstyle{plain}
\newtheorem{theorem}{Theorem}[section]
\newtheorem{lemma}[theorem]{Lemma}

\newtheorem{proposition}[theorem]{Proposition}

\newtheorem{corollary}[theorem]{Corollary}

\theoremstyle{definition}
\newtheorem{defn}[theorem]{Definition}
\newtheorem{definition}[theorem]{Definition}

\newtheorem{remark}[theorem]{Remark}

\newtheorem{question}[theorem]{Question}



\usepackage[margin=1in,marginparwidth=0.8in, marginparsep=0.1in]{geometry}


\def\CC{\mathbb{C}}

\def\RR{\mathbb{R}}
\def\R{\RR}

\def\ZZ{\mathbb{Z}}


\def\R{\mathbb{R}}
\def\Z{\mathbb{Z}}


\newcommand\cC{\mathcal{C}}

\newcommand\cV{\mathcal{V}}



\newcommand\frc{\mathfrak{c}}

\newcommand\frg{\mathfrak{g}}










\newcommand{\isom}{\stackrel{\sim}{\to}}

\newcommand{\wt}[1]{\widetilde{#1}}

\newcommand\quash[1]{}





\newcommand\ol{\overline}

\newcommand\beq{\begin{equation}}
\newcommand\eeq{\end{equation}}

\usepackage{graphicx}


\newcommand{\mush}{\mathit{\mu sh}}

\usepackage{hieroglf}


\title{Invariance of microsheaves on stable Higgs bundles}

\dedicatory{}
\author{David Nadler}
\thanks{}
\address{Department of Mathematics, UC Berkeley, Evans Hall, Berkeley CA 94720, USA}
\email{nadler@math.berkeley.edu}
\author{Vivek Shende}
\thanks{}
\address{Centre for Quantum Mathematics, SDU, Campusvej 55, 5230 Odense M, Denmark $\qquad \qquad$
\& Department of Mathematics, UC Berkeley, Evans Hall, Berkeley CA 94720, USA}
\email{vivek.vijay.shende@gmail.com}
\date{}
\subjclass[2010]{}
\keywords{}


\begin{document}


\begin{abstract}
The spectral side of the (conjectural) Betti geometric Langlands correspondence concerns sheaves 
on the character stack of an algebraic curve; in particular, the categories in question are manifestly 
invariant under deformations of the curve.  By contrast the same invariance is certainly not manifest, and is
presently not known, for their automorphic counterparts, in particular because 
the singularities of the global nilpotent cone may vary significantly with the complex structure of the curve.  

Here we establish the corresponding invariance statement for the category of microsheaves on the open subset   
of stable Higgs bundles on nonstacky components where all semistables are stable, e.g. for coprime
rank and degree or for a punctured curve with generic parabolic weights.  
The proof uses the known global symplectic geometry 
of the Higgs moduli space to invoke recent results on the invariance of microlocal sheaves.  
 \end{abstract}


\maketitle




\section{Introduction}

Let $C$ be a complex algebraic curve and $G$ a reductive group.  We write
$Bun_{G}(C)$ for the moduli (stack) of $G$-bundles, possibly equipped with parabolic structures omitted from the notation, 
and $T^*Bun_{G}(C)$ for its cotangent space, often
identified with the moduli of Higgs bundles.  We recall that the cotangent to a smooth stack, while conic symplectic like
the cotangent bundle to a smooth scheme, also has somewhat surprising
features, being for instance only virtually smooth, and having scaling-fixed loci disjoint from the zero section \cite{Beilinson-Drinfeld}. 
Studying $G$-invariant functions yields a morphism $h: T^*Bun_{G}(C) \to A_{G, C}$ to a vector space \cite{Hitchin-system}; 
the central fiber $N = h^{-1}(0) \subset T^*Bun_{G}(C)$ is termed the global nilpotent cone.  

The  Betti variant of the geometric Langlands conjecture \cite{BenZvi-Nadler} has, 
on the automorphic or A-side, the category $Sh_N(Bun_{G}(C))$ of sheaves on $Bun_G(C)$ with microsupport in $N$.  
Because the conjecturally equivalent Galois, spectral or B-side concerns a category which depends 
on $C$ only through its topology, one  expects that $Sh_N(Bun_{G}(C))$ is locally constant
with respect to deformations of $C$.  This local constancy is not at all obvious, and is presently not known.
Indeed, a priori, the category $Sh_N(Bun_{G}(C))$ would be expected to depend sensitively 
on the singularities of the global nilpotent cone $N$, whose variation with $C$ is not easy to describe.

In general, given any smooth stack $M$ and any closed conic $V \subset T^*M$, one may consider $Sh_V(M)$, the category
of sheaves on $M$ which are microsupported in $V$.  There is a sheaf of categories $\mu sh_V$ 
on $T^*M$ (in fact, supported on $V$) such that $\mu sh_V(T^*M) = Sh_V(M)$ \cite{Kashiwara-Schapira}.   

There are so-called stable loci inside the moduli of bundles $Bun_{G}(C)$
and its cotangent $T^*Bun_{G}(C)$.  These are open sets, satisfying
$$T^*(Bun_{G}(C)^{st}) \subset  (T^*Bun_{G}(C))^{st} \subset T^*Bun_{G}(C)$$
where all inclusions are typically strict.  We may consider the restriction functor: 

\begin{equation} \label{eq: restriction to higgs}
\mu: Sh_N(Bun_{G}(C)) = \mu sh_N(T^*Bun_{G}(C)) \to  \mu sh_N((T^*Bun_{G}(C))^{st})
\end{equation}

We will write $Bun_G(C, \alpha)$ where $\alpha$ will mean either (a) discrete data determining
a choice of component of $Bun_G(C)$, or more generally (b) data of marked points and parabolic weights 
or even (c) marked points, irregular singularity types, and parabolic weights.  In particular, $\alpha$ contains
the data required to specify a stability condition.  

\begin{definition} \label{good}
We say $\alpha$ is good if the following hold: 

\begin{enumerate}
\item \label{not a stack} $(T^*Bun_G(C, \alpha))^{st}$ is a smooth manifold.
\item \label{hamiltonian} The scaling action of $S^1 \subset \CC^*$  is Hamiltonian for the natural K\"ahler form on $(T^*Bun_G(C, \alpha))^{st}$.
\item \label{proper bounded moment} The moment map for this Hamiltonian action is proper and bounded below.
\item \label{right skeleton} $N \cap (T^*Bun_G(C, \alpha))^{st} \subset (T^*Bun_G(C, \alpha))^{st}$ is the locus of points whose $\CC^*$-orbits have compact closure. 
\end{enumerate}
\end{definition}

Let us discuss when these conditions hold. Always the center $Z(G)$ sits inside the stabilizer of every point of $Bun_G(C)$ hence 
of $T^* Bun_G(C)$; to have a chance at getting a smooth manifold (rather than stack) we replace $Bun_G(C)$ by its rigidification 
with respect to $Z(G)$ as in \cite[Appendix A]{AOV}.
Now the stable locus will be generically non-stacky, and for some groups, e.g. $GL(n)$ or $SL(n)$, will have no 
orbifold points at all.\footnote{Our methods extend straightforwardly to the case when $(T^*Bun_G(C, \alpha))^{st}$ is globally a finite quotient of some 
space satisfying the hypotheses of Definition \ref{good}, compatibly with deformations of $(C,\alpha)$.} 
The remaining
conditions are known to hold in the unramified or tame cases 
whenever all semistables are stable, e.g. for $GL(n)$ bundles of degree $d$ coprime to $n$. 
The same is presumably true in the wild case, for which however the literature
 is presently underdeveloped. 

The purpose of this note is to observe: 

\begin{theorem} \label{thm: main}   
The category $\mu sh_N((T^*Bun_{G}(C, \alpha))^{st})$ is locally constant 
as we deform 
through good $(C, \alpha)$. 
\end{theorem}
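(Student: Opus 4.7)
The plan is to realize $(T^*Bun_G(C,\alpha))^{st}$, as $(C,\alpha)$ varies through good parameters, as a smooth family of (complete) Liouville-type symplectic manifolds whose skeleton is $N$, and then invoke deformation invariance of microsheaves for such families. First I would repackage Definition \ref{good}: condition (\ref{not a stack}) gives a smooth manifold $M_{C,\alpha} := (T^*Bun_G(C,\alpha))^{st}$; conditions (\ref{hamiltonian})--(\ref{proper bounded moment}) exhibit it as a symplectic manifold with a proper, bounded-below Hamiltonian for the $S^1$-subgroup of the scaling $\CC^*$; and condition (\ref{right skeleton}) identifies $N \cap M_{C,\alpha}$ with the stable set of the downward gradient flow of this Hamiltonian, i.e.\ the core of the associated Liouville structure. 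Consequently $\mu sh_N(M_{C,\alpha})$ is the microsheaf category on the skeleton of a Liouville manifold, which is the setting where the recent invariance results apply.

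Second, I would assemble these into a family. Over a smooth base $B$ parametrizing a deformation of $(C,\alpha)$ through good parameters, the relative stable Higgs moduli $\mathcal{M} \to B$ is a smooth family of symplectic manifolds; the K\"ahler form, $S^1$-action, and moment map all depend smoothly on the base, and by (\ref{right skeleton}) the relative nilpotent cone is the fiberwise core. Combined with the fiberwise properness and lower boundedness of the moment map from (\ref{proper bounded moment}), upgraded to hold uniformly on compact subsets of $B$, this presents $\mathcal{M} \to B$ as a smooth family of Liouville manifolds with skeleton the relative nilpotent cone, in the sense required by the microlocal deformation-invariance theorem.

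Third, I would invoke that invariance theorem: for a smooth family of Liouville manifolds with proper, bounded-below Hamiltonian, the categories of microsheaves on the fiberwise skeleta form a local system of categories over the base, with transport implemented by the Hamiltonian isotopies generated by the varying Liouville data. Applied to $\mathcal{M} \to B$, this yields the desired local constancy of $\mu sh_N(M_{C,\alpha})$ and proves Theorem \ref{thm: main}.

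The main obstacle is the second step: the pointwise Liouville data guaranteed by Definition \ref{good} must fit together smoothly over $B$, and in particular the properness and lower boundedness of the moment map must hold \emph{uniformly} in the family rather than merely fiberwise, so that the compact-orbit loci assemble into a closed family core and the invariance theorem applies. In the unramified and tame cases this uniformity follows from standard properness of the Hitchin map and compactness properties of the $S^1$-action; in wild or heavily parabolic settings the corresponding analytic foundations are less developed, and this is the delicate technical point that governs how widely Theorem \ref{thm: main} can be stated.
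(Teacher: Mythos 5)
Your overall strategy --- extract a Weinstein/Liouville structure from the good conditions, assemble the stable Higgs moduli into a smooth family over the deformation base, and invoke the deformation invariance of microsheaves from \cite{Nadler-Shende} --- is exactly the skeleton of the paper's argument (Proposition \ref{prop: weinstein from holomorphic symplectic}, Corollary \ref{cor: is Weinstein}, Theorem \ref{mush invariance}). But there is a genuine gap at the step where you declare that ``$\mu sh_N(M_{C,\alpha})$ is the microsheaf category on the skeleton of a Liouville manifold, which is the setting where the recent invariance results apply.'' The category in the theorem statement is the restriction to the stable locus of the sheaf of categories $\mu sh_N$ on the cotangent stack $T^*Bun_G(C)$, defined via microsupport of sheaves on the stack $Bun_G(C)$ (equivalently, via equivariant sheaves on charts $X$ with $T^*(X/G) = m^{-1}(0)/G$). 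The invariance theorem applies instead to the intrinsically defined category $\mu sh_{V,\eta}(V)$ of an abstract Weinstein manifold $V$ equipped with a Maslov datum $\eta$. These are not the same object by fiat, and identifying them is the main technical content of the paper: one must descend the fiber polarization of $T^*X$ through symplectic reduction to obtain a canonical stable polarization (hence Maslov datum) on the non-stacky open set $V$ (Lemma \ref{stable polarization descent}, Corollary \ref{fiber polarization restriction}), and then prove the comparison $\mu sh_{T^*(X/G)}|_U \isom \mu sh_{U,\eta}$ (Lemma \ref{cotangent equivariant mush}, Corollary \ref{weinstein subset}, Proposition \ref{invariance}). Your proof never mentions Maslov data, which the invariance theorem requires as a continuous family, and never addresses why the stacky and intrinsic microsheaf categories agree; without this the invariance theorem proves local constancy of a category not yet identified with the one in the statement.

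A secondary imprecision: the symplectic form for which the stable locus is Weinstein is \emph{not} the K\"ahler form $\omega_I$ for which the $S^1$-action is Hamiltonian, but rather $\mathrm{Re}(e^{i\theta}\Omega_I)$ where $\Omega_I$ is the holomorphic symplectic form; the moment map $m$ for $\omega_I$ serves only as the Lyapunov function, and $\frac{1}{d}\nabla m$ is the Liouville field for $\mathrm{Re}(e^{i\theta}\Omega_I)$ precisely because $S^1$ acts on $\Omega_I$ with nonzero weight $d$ (this weight is also what makes $\mathrm{Re}(e^{i\theta}\Omega_I)$ exact). Your phrasing conflates the two symplectic forms, so the exactness of the relevant structure --- without which ``Liouville'' is not even meaningful --- is left unaddressed. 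Your closing concern about uniformity of properness over the base is reasonable but is not where the paper locates the difficulty; the paper's families are handled by requiring the Bott--Morse data to vary smoothly, and the real work is the stack/Maslov comparison above.
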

\begin{proof} 
The hypotheses ensure that $(T^*Bun_G(C, \alpha))^{st}$ is
a Weinstein symplectic manifold with core $N$, as asserted in Corollary \ref{cor: is Weinstein} below. 
Microsheaf categories were shown to be invariant under Weinstein deformation in \cite{Nadler-Shende};
some additional technical work here is required to check that microlocalization in cotangent
bundles of stacks is compatible with the definition of microsheaves in \cite{Nadler-Shende}.  After carrying
this out, we arrive at an invariance result which directly applies in the present context in Proposition \ref{invariance}
below. 
This implies the stated result.
\end{proof}

Some properties of the map $\mu$ of \eqref{eq: restriction to higgs}, in particular that it is non-zero, follow from known results.
Indeed, recall that for any microsupport condition $\Lambda$ and any smooth Lagrangian point $p \in \Lambda$, there
is a `microstalk' functor from $(\mu sh_{\Lambda})_p$ to the coefficient category. 
In particular,  
on $Sh_N(Bun_{G}(C))$, the functor $f_\xi$ of microstalk at a smooth point $\xi \in N \cap (T^*Bun_G(C, \alpha))^{st}$ factors through $\mu$. 
For $\xi$ the intersection of $N$ with the Kostant section, the functor $f_\xi$ is known to be hom (up to a shift) from the Whittaker sheaf~\cite{NT}.
The pairing of the Whittaker sheaf and certain Eisenstein series is known to be non-zero, see~\cite{Taylor} and the citations therein. Thus $\mu$ is non-trivial on such Eisenstein series.
%
%


Given that $\mu sh_N((T^*Bun_{G}(C))^{st})$ is in fact locally constant under deformations of $C$, it is natural
to ask: 

\begin{question} \label{galois stable} 
Does the category $\mu sh_N((T^*Bun_{G}(C))^{st})$ and the map \eqref{eq: restriction to higgs} have a counterpart on the Galois side of Betti geometric Langlands?
\end{question} 

\begin{remark} 
In the present article, we use the methods of microsheaf theory, leaning on the invariance 
theorem of \cite{Nadler-Shende}.  However, the constancy of 
$\mu sh_N((T^*Bun_{G}(C, \alpha))^{st})$ can be seen from another point of view: 
for any (stably polarized) Weinstein manifold $W$, one has from \cite{GPS3} an isomorphism 
$\Gamma(W, \mu sh_{\mathfrak{c}_W}) \cong Fuk(W)$ between the category of microsheaves
supported on the core and the wrapped Fukaya category -- and the local constancy of $Fuk(W)$ 
under deformations of $W$ is essentially immediate from the local constancy of symplectic manifolds in families.\footnote{In particular, 
we could appeal to \cite{GPS3} instead of \cite{Nadler-Shende}. The former
depends on some results of the latter, but not the invariance theorem directly.}  

In terms of this interpretation, Question \ref{galois stable} might be restated as asking for a precise
relationship between the Betti geometric Langlands conjecture and homological mirror symmetry for moduli of stable Higgs bundles. 
(Many authors have considered related questions, e.g. \cite{Hausel-Thaddeus, Donagi-Pantev-duality, Kapustin-Witten}, but we are not aware
of any even conjectural answer to Question \ref{galois stable}.) 
One can also use the relationship with Fukaya categories to produce objects from 
smooth Hitchin fibers, which presently cannot be done with microsheaf methods alone, as these fibers are nonexact \cite{Shende-fibers}.  
\end{remark} 

\vspace{2mm}
\noindent {\bf Acknowledgements.}  
We thank Dima Arinkin,  David Ben-Zvi, Dennis Gaitsgory, Nick Rozenblyum and Zhiwei Yun for helpful discussions.  

DN was partially supported by NSF grant DMS-2101466.
VS is supported by the Villum Fonden (Villum Investigator 37814),  the Danish National Research foundation (DNRF157), 
the Novo Nordisk Foundation (NNF20OC0066298), and the USA NSF (CAREER DMS-1654545).

\section{Recollections from exact symplectic geometry} \label{sec: exact}
We review here some notions from exact symplectic geometry.  A textbook treatment is 
\cite{Cieliebak-Eliashberg}. 

A Liouville form on a manifold $M$ is a differential 1-form $\lambda$ such that $\omega = d\lambda$ 
is nondegenerate so a symplectic form.  Such a form determines a Liouville vector field $Z$ characterized by $\lambda = \omega(Z, \cdot)$
or equivalently $L_Z \omega = \omega$.  From the latter characterization, one sees that any $Z$-invariant subset must have 
zero or infinite volume, and in particular that $M$ cannot be a closed manifold (unless it is zero-dimensional).  

An 
exact symplectic (noncompact) manifold-without-boundary $M$ is said to be 
Liouville\footnote{In fact this is what is usually called a finite type Liouville manifold.  Here we will only be interested
in such finite type objects, so omit the adjective.}  if: 
\begin{enumerate}
\item $Z$ is complete 
\item \label{liouville lyapunov} there is a proper and bounded below smooth function $\phi: M \to \R$ which is Lyapunov for $Z$ and has no critical
points near infinity (= in the complement of a compact set). 
\end{enumerate}
If $M_\circ \subset M$ is a compact manifold with boundary and $Z$ is outwardly transverse to the boundary, then
we say $M_\circ$ is a Liouville domain.  In this case, $(\partial M_\circ, \lambda)$ is a contact manifold.  
In case $\phi$ has no critical points in $M \setminus M_\circ$, we say that $M$ is the completion of $M_\circ$; 
indeed one can reconstruct $M$ by extending $M_\circ$ in a canonical way.  The above hypotheses
guarantee that for $c \gg 0$, the space $M_{\le c} := \phi^{-1}(-\infty, c]$ is a Liouville domain completing to $M$. 
Finally, the {\em core} $\frc_M \subset M$ is the locus of all points which do not escape to infinity
under the Liouville flow.  The core is contained inside any Liouville domain completing to $M$. 

A family of exact symplectic manifolds $(M_t, \lambda_t)$ is said to be a family of Liouville manifolds if one can choose
$\phi$ defined (and smooth) on the total space of the family, for which each $\phi_t$ witness \eqref{liouville lyapunov} above.
In this case, by a Moser argument one can show the existence of a family of diffeomorphisms $A_t: M_0 \to M_t$ such that 
$A_t^{-1} \lambda_t =  \lambda_0 + dh_t$ for compactly supported $h_t$.  

A Liouville manifold was originally said to be Weinstein if $\phi$ can be chosen to be Morse.  This condition has been variously
weakened in subsequent literature; here we will allow $\phi$ to be Bott-Morse.  
A key feature is that the descending manifolds from the critical loci of $\phi$ are all isotropic, and $\frc_M$ is their union. 

A 1-parameter family of Liouville manifolds (as defined above) is typically said to be a family of Weinstein manifolds if the family $\phi_t$ 
is generalized Morse (i.e. behaves as a generic 1-parameter family of functions), or the analogous Bott-Morse notion.  For our applications
here, we will in fact only need to consider families which are genuinely Bott-Morse, i.e. the critical loci vary smoothly with $t$.  

We recall how Weinstein manifolds arise from conic holomorphic symplectic geometry: 

\begin{lemma} \cite{Frankel} \label{lem: Frankel} 
Let $X$ be a K\"ahler manifold with Hermitian metric $g + i \omega$.    
Assume given an $S^1$-action respecting the K\"ahler structure, which
is moreover Hamiltonian for $\omega$ with proper moment map $m: X \to \RR$.  Then: 
\begin{enumerate}
\item The function $m$ is a perfect Bott-Morse function, with critical locus the fixed-points $X^{S^1}$.
\item The $S^1$-action may be extended to 
a $\CC^*$-action where $\RR_{>0}$ acts by gradient flow for $m$.  
\end{enumerate} 
\end{lemma}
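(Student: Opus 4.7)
The plan is to follow Frankel's original argument: derive a local normal form for $m$ near each fixed component from the weight decomposition of the $S^1$-action on the normal bundle, then upgrade the $S^1$-action to a holomorphic $\CC^*$-action via the Kähler identification of $\nabla m$ with a sign of $JK$.

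Let $K$ denote the vector field generating the $S^1$-action. The Hamiltonian relation $\iota_K\omega = -dm$ combined with nondegeneracy of $\omega$ identifies the critical set of $m$ with $X^{S^1}$. Near a component $F \subset X^{S^1}$, because $S^1$ preserves $J$ and $g$, the normal bundle $N_F$ is Hermitian and splits as an orthogonal direct sum of weight-$k$ unitary line bundles for $k \in \ZZ \setminus \{0\}$. Linearizing and passing to Darboux-Kähler coordinates on the normal slice, one computes
\[
m = c + \tfrac{1}{2}\sum_k k\,|z_k|^2 + O(3),
\]
so the normal Hessian is nondegenerate with (real) index $2\sum_{k<0}\dim_\CC V_k$, which is always even. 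Hence $m$ is Bott-Morse with critical locus $X^{S^1}$; properness and boundedness below of $m$ ensure compact sublevel sets, and the classical fact that an even-index Morse-Bott function whose negative normal bundles are $\QQ$-orientable (automatic here as they are complex) is $\QQ$-perfect concludes part (1).

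For part (2), since $S^1$ preserves both $J$ and $g$, $K$ is simultaneously Killing and real holomorphic, which forces $[K, JK] = 0$; the Kähler identity $\omega(u,v) = g(Ju,v)$ then gives $\nabla m = \pm JK$. The commuting pair $(K, JK)$ integrates locally to a holomorphic $\CC$-action $(s+it)\cdot x := \exp(sJK)\exp(tK)(x)$, which combined with the $2\pi$-periodicity of $\exp(tK)$ descends to the desired $\CC^*$-action provided $\exp(sJK)$ is globally complete. Completeness is standard under the stated hypotheses: along a trajectory $\gamma$ of $\nabla m$, one has $\tfrac{d}{ds}(m\circ\gamma) = |\nabla m|^2$, so $m \circ \gamma$ is monotone; the energy identity together with Cauchy-Schwarz bounds the Riemannian length of $\gamma$ on any finite interval in terms of $m$-increments, and properness of $m$ (with the resulting Riemannian completeness on compact sublevel sets) rules out finite-time blow-up.

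The main obstacle is assembling the global analytic input for part (2): real-holomorphicity of $K$ gives the local holomorphic $\CC$-action for free, but only properness and boundedness below of $m$ guarantee that the gradient flow is complete and that the local $\CC$-actions glue into a genuine global $\CC^*$-action on $X$. Part (1) by contrast reduces to a short linear-algebraic computation in weight coordinates followed by an invocation of classical Morse-Bott theory.
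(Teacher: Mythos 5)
The paper does not actually prove this lemma --- it is quoted from \cite{Frankel} without proof --- so there is no internal argument to compare against. Your reconstruction follows the standard (and presumably intended) route: part (1) via the weight decomposition of the normal bundle to a fixed component, the local normal form $m = c + \tfrac12\sum_k k|z_k|^2 + O(3)$, evenness of the index, and the lacunary/orientability argument for perfection. That part is correct, modulo the small slip that you invoke boundedness below of $m$, which is not among the hypotheses of the lemma (it only appears later, in the Proposition); for perfection you only need that each slab $m^{-1}([a,b])$ is compact, which properness already gives. The identification $\nabla m = \pm JK$, the vanishing of $[K,JK]$, and the assembly of the two commuting flows into a local holomorphic $\CC$-action descending to $\CC^*$ are also correct.

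The genuine gap is in your completeness argument for part (2). Your Cauchy--Schwarz estimate bounds the length of a gradient trajectory on $[0,T)$ by $\sqrt{T\,(m(\gamma(T^-))-m(\gamma(0)))}$, and properness then forces convergence --- but only \emph{provided} $m\circ\gamma$ stays finite as $s\to T^-$. Properness of $m$ (preimages of compacta are compact) does not by itself exclude $m\circ\gamma \to +\infty$ in finite time along the ascending flow: since $\tfrac{d}{ds}(m\circ\gamma) = |\nabla m|^2 = |K|^2$, one needs some growth control such as $|K|^2 \le C(1+m)$ near infinity (true for the linear model, where $m$ grows exponentially in flow time, but not a formal consequence of the stated hypotheses), or else completeness must be imported from the geometry of the specific situation. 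In Frankel's original setting $X$ is compact and the issue does not arise; in the application in this paper the $\CC^*$-action already exists algebraically on the moduli of Higgs bundles, and the content is only that $\RR_{>0}$ acts by gradient flow. So either add the missing estimate, or note explicitly that completeness of the $JK$-flow is an additional hypothesis (satisfied in the intended applications) rather than a consequence of properness alone.
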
 

\begin{proposition} \label{prop: weinstein from holomorphic symplectic}
In the situation of Lemma \ref{lem: Frankel}, assume given also a 
holomorphic symplectic form $\Omega$ on which $S^1$ acts by a nonzero character $d \in \ZZ$. 
Then $\frac{1}{d} \nabla m$ is a Liouville vector field for any of the real symplectic forms 
$\mathrm{Re}(e^{i \theta} \Omega)$.  

In particular, if in addition $m$ is bounded below, then 
$(X, \mathrm{Re}(e^{i \theta} \Omega))$ is a Weinstein manifold, with witness function $m$. 
The core $\frc_X$ is the locus of points with bounded $\CC^*$-orbits. 

Moreover, if given a smooth family $(X_t, g_t, \omega_t, \Omega_t)$ and smooth family of Hamiltonian $S^1$-actions, 
then  $(X_t, \mathrm{Re}(e^{i \theta} \Omega_t))$ is a family of Weinstein manifolds. 
\end{proposition}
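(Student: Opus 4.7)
The plan is to reduce the Liouville property to a single Lie-derivative identity forced by the $S^1$-weight hypothesis. Lemma \ref{lem: Frankel} extends the $S^1$-action to a holomorphic $\CC^*$-action whose $\RR_{>0}$-part is generated by $\nabla m$. Under this extension, the hypothesis that $S^1$ acts on $\Omega$ through the character $d$ upgrades to $\lambda \cdot \Omega = \lambda^{d}\Omega$ for $\lambda \in \CC^{*}$; differentiating along the $\RR_{>0}$-axis at $\lambda = 1$ yields, as an identity of real Lie derivatives applied to a holomorphic form,
\[
L_{\nabla m}\Omega \;=\; d\,\Omega.
\]
Once sign conventions are pinned down so that the identity appears with the stated sign of $d$, multiplication by $e^{i\theta}$ and extraction of real parts gives $L_{\frac{1}{d}\nabla m}\mathrm{Re}(e^{i\theta}\Omega) = \mathrm{Re}(e^{i\theta}\Omega)$. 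Thus $Z := \tfrac{1}{d}\nabla m$ is Liouville for each real form $\sigma_\theta := \mathrm{Re}(e^{i\theta}\Omega)$, whose nondegeneracy is a standard fact about holomorphic symplectic manifolds.

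Assuming also that $m$ is bounded below, I would verify the Weinstein conditions of Section \ref{sec: exact} for $(X,\sigma_\theta)$ with witness function $m$ one by one. Properness and boundedness of $m$ are hypotheses; the Bott-Morse property, with critical locus $X^{S^1}$, is Lemma \ref{lem: Frankel}; the Lyapunov property follows from $Z(m) = \tfrac{1}{d}|\nabla m|^{2}$, which is nonnegative and vanishes exactly on $X^{S^1}$; and completeness of $Z$ is automatic, since its flow is the $\RR_{>0}$-part of the globally defined holomorphic $\CC^{*}$-action. For the core description, the $Z$-flow and the $S^{1}$-action together generate this $\CC^{*}$-action and $S^{1}$ is compact, so the $Z$-orbit of a point is relatively compact in both time directions if and only if the full $\CC^{*}$-orbit closure is compact; this matches the stated description of $\frc_{X}$.

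The family statement drops out of the same argument once one observes that $m_{t}$, $\nabla m_{t}$, $Z_{t}$, and the fixed loci $X_{t}^{S^{1}}$ all depend smoothly on $t$, so the single-slice Weinstein structure assembles into a Bott-Morse Weinstein family in the sense used in Section \ref{sec: exact}. Given Lemma \ref{lem: Frankel}, nothing in the above is genuinely hard; the single point that deserves care is the first, namely correctly extracting the real identity $L_{\nabla m}\Omega = d\,\Omega$ from the weight-$d$ action, since the relation $\nabla m \propto JV$ (with $V$ the $S^{1}$-generator) combined with $\Omega$ being of type $(2,0)$ introduces a sign that must be tracked through Frankel's conventions. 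Everything else — the Lyapunov estimate, completeness, Bott-Morse structure, core identification, and smooth dependence on $t$ — is a direct consequence of the setup.
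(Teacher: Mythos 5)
The paper states Proposition \ref{prop: weinstein from holomorphic symplectic} without proof, so there is no argument of the authors to compare yours against; your write-up supplies the missing details along what is surely the intended route: promote the weight-$d$ identity from $S^1$ to the full $\CC^*$-action by holomorphicity, differentiate along $\RR_{>0}$ (whose generator is $\nabla m$ by Lemma \ref{lem: Frankel}) to get $L_{\nabla m}\Omega = d\,\Omega$, and then check the Liouville/Weinstein axioms of Section \ref{sec: exact} one by one. This is correct.

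Two small points that both the statement and your proof leave implicit. First, your Lyapunov step $Z(m)=\tfrac{1}{d}|\nabla m|^{2}\ge 0$ requires $d>0$: for $d<0$ the forward Liouville flow is the downward gradient flow, every forward orbit is trapped by properness and boundedness below of $m$, and $(X,\mathrm{Re}(e^{i\theta}\Omega))$ cannot be Liouville with witness $m$ (indeed the locus of non-escaping points would be all of $X$). So the proposition must be read with $d>0$, as holds in the application where $d=1$. Second, the finite-type Liouville condition of Section \ref{sec: exact} requires $m$ to have no critical points near infinity, i.e. that the fixed locus $X^{S^1}$ be compact; this does not follow formally from properness, boundedness below and the Bott--Morse property, and should be regarded as an additional (easily verified in the Higgs setting, where the fixed locus lies in the compact nilpotent cone) hypothesis. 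Neither point affects the substance of your argument, and the remaining steps --- nondegeneracy of $\mathrm{Re}(e^{i\theta}\Omega)$, completeness of $Z$ via the global $\CC^*$-action, the identification of the core with the locus of bounded $\CC^*$-orbits (using that the backward, downhill direction is automatically trapped), and the smooth dependence on $t$ needed for the Bott--Morse family notion --- are all handled correctly.
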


\section{Recollections from geometry of moduli of Higgs bundles} \label{sec: geometry}

Some of the original references on the geometry of moduli of Higgs bundles include: \cite{Hitchin, Donaldson, Corlette, Hitchin-system, Simpson-higgs, Simpson-moduli-1, Simpson-moduli-2, Faltings, Laumon, Ginzburg}.  For curves with marked points and parabolic structures, 
valuable references include \cite{Simpson-noncompact, Biquard,  Konno-parabolic-higgs, Yokogawa-parabolic-higgs,
Yokogawa-deformation, Nakajima-parabolic-higgs, Thaddeus-parabolic-higgs}.

Recall we write $Bun_C(C, \alpha)$ for a component of the moduli stack of $G$-bundles on $C$, with various
discrete data collected into $\alpha$.  Then $T^*Bun_{G}(C, \alpha)$ is a (derived) symplectic stack; in particular, 
on its underived and non-stacky locus, it is smooth, and carries a complex structure $I$ and holomorphic symplectic form $\Omega_I$.  
The $\CC^*$ scaling action on the cotangent fibers preserves the stable locus, 
and acts with weight 1 on the line spanned by $\Omega_I$.

Here are some deeper properties, known to hold in at least the unramified and tamely ramified settings.  
The stable locus $(T^*Bun_{G}(C, \alpha))^{st}$ is a smooth orbifold.  In fact it carries a natural K\"ahler metric $g$; we write 
$\omega_I (v, w) := g(v, Iw)$ for the K\"ahler form, which is as always a real symplectic form.  We expand $\Omega_I := \omega_J + i \omega_K$; in fact
$\omega_J$ and $\omega_K$ are K\"ahler forms for other complex structures J and K, which together with $I$ provide a hyperk\"ahler structure.  
The $S^1 \subset \CC^*$ action is Hamiltonian for the symplectic form $\omega_I$.  Consequently its moment map
$m: (T^*Bun_{G}(C, \alpha))^{st} \to \RR$ is Bott-Morse, with critical locus equal to the $\CC^*$ fixed points.  
That is, $\alpha$ is good in the sense of Definition \ref{good}, save only for the possibility that $(T^*Bun_{G}(C, \alpha))^{st}$
may be an orbifold rather than a manifold.  

In any case, the conditions of Definition \ref{good} are precisely what is needed to apply 
Proposition \ref{prop: weinstein from holomorphic symplectic} to deduce: 

\begin{corollary} \label{cor: is Weinstein} 
For good $\alpha$, the space $T^*Bun_{G}(C, \alpha))^{st}$ is a Weinstein manifold with skeleton
$N \cap T^*Bun_{G}(C, \alpha)$. 
Deformations  through good $\alpha$ give Weinstein deformations of $T^*Bun_{G}(C, \alpha))^{st}$. 
\end{corollary}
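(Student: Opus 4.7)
The plan is to directly invoke Proposition \ref{prop: weinstein from holomorphic symplectic} with $X = (T^*Bun_G(C,\alpha))^{st}$. First I would check that the four pieces of input data needed by that proposition are supplied by the ``good'' hypotheses together with the standard geometry recalled at the start of Section \ref{sec: geometry}. The K\"ahler structure $(g, \omega_I)$ on $X$ and the holomorphic symplectic form $\Omega_I = \omega_J + i\omega_K$ (with the $S^1$-action having weight $d = 1$ on $\CC \cdot \Omega_I$) are part of the standard hyperk\"ahler package. Conditions \eqref{not a stack}, \eqref{hamiltonian}, \eqref{proper bounded moment} of Definition \ref{good} are exactly what is needed: \eqref{not a stack} gives a genuine manifold, \eqref{hamiltonian} gives that the $S^1$-action is Hamiltonian and K\"ahler, and \eqref{proper bounded moment} gives the properness and lower boundedness of the moment map $m$.

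Proposition \ref{prop: weinstein from holomorphic symplectic} then directly yields that $(X, \mathrm{Re}\,\Omega_I)$ is a Weinstein manifold, with Liouville vector field $\nabla m$ (noting $d=1$) and Morse--Bott witness function $m$. The proposition also identifies the core as the locus of points whose $\CC^*$-orbits are bounded (equivalently, have compact closure). This is precisely where condition \eqref{right skeleton} of Definition \ref{good} enters: it identifies this locus of bounded orbits with $N \cap (T^*Bun_G(C,\alpha))^{st}$, giving the asserted description of the skeleton.

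For the family statement, I would note that a smooth deformation through good $\alpha$ produces a smooth family $(X_t, g_t, \omega_{I,t}, \Omega_{I,t})$ with smoothly varying Hamiltonian $S^1$-actions and smoothly varying, fiberwise proper and bounded below, moment maps $m_t$. (Here one uses that the K\"ahler and hyperk\"ahler structures on moduli of stable Higgs bundles vary smoothly with the complex structure and parabolic data on $C$, and that the four conditions of Definition \ref{good} hold uniformly along the family.) The family version of Proposition \ref{prop: weinstein from holomorphic symplectic} then gives a smooth family of Weinstein structures on $X_t$.

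The main obstacle is nothing deep in symplectic geometry; rather it is the verification that in families the K\"ahler/hyperk\"ahler package of Section \ref{sec: geometry} varies smoothly and that conditions \eqref{not a stack}--\eqref{right skeleton} persist uniformly on the parameter space. In the unramified and tame settings this is standard from the cited references on moduli of (parabolic) Higgs bundles; in the wild case, as remarked earlier, this is expected but the literature is less complete, so we simply build the assumption into the adjective ``good'' and note that any deformation along which goodness is preserved automatically produces a Weinstein deformation.
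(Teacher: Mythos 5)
Your proposal is correct and follows essentially the same route as the paper, which simply observes that the conditions of Definition \ref{good}, together with the K\"ahler/hyperk\"ahler package recalled in Section \ref{sec: geometry}, are exactly the hypotheses of Proposition \ref{prop: weinstein from holomorphic symplectic} (including its family version), with condition \eqref{right skeleton} identifying the core with $N \cap (T^*Bun_{G}(C,\alpha))^{st}$. Your write-up is in fact more detailed than the paper's one-line justification, and your closing caveat about uniformity along families matches the paper's stance of building these requirements into the adjective ``good.''
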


\section{Microsheaves on stacks}

Let $M$ be a real manifold.  We write $Sh(M)$ for the category of sheaves on $M$ with coefficients in some 
appropriate symmetric monoidal stable presentable $\infty$-category $\mathcal{C}$, e.g. the dg derived category of  $\Z$-modules.  
In \cite{Kashiwara-Schapira} is introduced the notion of microsupport of sheaves: for each $F \in Sh(M)$,
a closed conic locus $ss(F) \subset T^*M$ of covectors along which sections of $F$ fail to propagate.  
For a conic subset $\Lambda \subset M$, one writes $Sh_\Lambda(M)$ for the full subcategory of sheaves
with microsupport contained in $\Lambda$.  

One obtains a sheaf of categories on  $T^*M$ by sheafifying the following
presheaf: 
\begin{equation} \label{microsheaves} 
\mu sh^{pre}_{T^*M}(U) = Sh(M) / \{F \,| \, ss(F) \cap U = \emptyset \}
\end{equation}
The objects of $\mu sh$ are termed microsheaves.  As $\mu sh$ is conic,
$\mu sh|_{T^*M \setminus M}$ descends to a sheaf of categories on the cosphere bundle $S^*M$,
which we denote also by $\mu sh$.  

For a closed subset $\Lambda$ of $T^*M$ or $S^*M$, we write
$\mu sh_\Lambda$ for the subsheaf of full subcategories of objects supported in $\Lambda$. 
When $\Lambda$ is smooth Lagrangian (or Legendrian), $\mu sh_\Lambda$ is locally isomorphic to the sheaf
of local systems on $\Lambda$.  The global twisting of $\mu sh_\Lambda$ is determined by the twisting of $\Lambda$ with
respect to the fiber polarization, and can be calculated by pullback from a universal map 
$\mu: U/O \to BPic(\cC)$.  

More generally, for any contact or conic symplectic manifold $V$ equipped with  a Maslov datum $\eta$, there is a sheaf of categories $\mu sh_{V, \eta}$, 
locally isomorphic to the corresponding sheaf of categories on a cotangent or cosphere bundle of the same dimension \cite{Shende-microlocal, Nadler-Shende}. 
About Maslov data: the symplectic tangent bundle or contact distribution is classified by a map $\tau: V \to BU$; a Maslov datum
is by definition a null-homotopy of the composition
$$V \xrightarrow{\tau} BU \to B(U/O) \xrightarrow{B\mu} B^2 Pic(\cC)$$

When $\cC$ is the dg derived category of  $\ZZ$-modules,  Maslov data
are canonically identified with pairs of a trivialization of the line bundle underlying $2 c_1(TV)$ (``grading") and a class in $H^2(V, {\pm 1})$ (``background class"). 
One can, and we will, always choose the trivial background class.  
(Precisely the same data is required to define various Floer theoretic invariants with $\Z$-grading and $\Z$-coefficients.) 
Moreover, {\em complex} symplectic or contact manifolds
carry canonical gradings (see e.g. \cite{perverse-microsheaves}), essentially because the complex symplectic group is simply-connected.  

A Lagrangian distribution (aka polarization) on $V$ determines a null-homotopy of $V \to B(U/O)$, hence in particular provides a Maslov
datum.  (More generally, we refer to a null-homotopy of $\gamma: V \to B(U/O)$ as a {\em stable polarization}.)  
Writing $\phi$ for the polarization by fibers of $T^*M$, one can show a canonical isomorphism 
\begin{equation} \label{only one mush} \mu sh_{T^*M} = \mu sh_{T^*M, \phi}
\end{equation} 
where the LHS is defined by \eqref{microsheaves} and the RHS is defined as in \cite{Nadler-Shende}.  

One of the main results of \cite{Nadler-Shende} is the following: 

\begin{theorem} \cite{Nadler-Shende} \label{mush invariance}
Let $W_t$ be a family of Weinstein manifolds, with cores $\Lambda_t$.  Let $\eta_t$ be a continuous family of Maslov data. 
Then the global sections category $\Gamma(W_t, \mu sh_{W_t, \eta_t})$ is locally constant in $t$.
\end{theorem}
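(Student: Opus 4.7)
The plan is to reduce Theorem \ref{mush invariance} to a local-in-time analysis, use a Moser-type argument to identify the underlying symplectic manifolds over short intervals, and then handle the variation of the Weinstein skeleton through two elementary moves: Hamiltonian isotopies of the core and birth--death bifurcations of critical strata.

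First I would observe that the statement is local in $t$: it suffices to establish constancy on a neighborhood of each $t_0$. After shrinking, the Moser argument recalled in Section \ref{sec: exact} produces diffeomorphisms $A_t\colon W_{t_0}\to W_t$ with $A_t^\ast \lambda_t = \lambda_{t_0} + dh_t$ for compactly supported $h_t$. This canonically identifies the Liouville manifolds and their contact boundaries, and transports the Maslov datum $\eta_t$ to a continuous family on the fixed ambient symplectic manifold. The remaining content is the dependence of $\mu sh_{W_{t_0},\eta_t}$ on the core $\Lambda_t$ as it deforms inside $W_{t_0}$.

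Second, I would analyze a generic 1-parameter family of Bott--Morse Weinstein potentials. Away from a discrete set of bifurcation times, the critical loci vary smoothly and the induced flow of $\Lambda_t$ is Hamiltonian. Invariance of $\mu sh_{\Lambda_t}$ under Hamiltonian isotopy of the skeleton then follows from a Guillermou--Kashiwara--Schapira sheaf-quantization argument, suitably adapted to the contact/symplectic framework of \cite{Nadler-Shende}. Continuity of $\eta_t$ ensures that the induced equivalences intertwine the chosen trivializations of grading and background class. Third, at the isolated bifurcation times, a canceling pair of isotropic cells appears on (or is deleted from) the skeleton. One constructs a local normal form, essentially a standard Weinstein handle-cancellation, and checks in the model that the associated sheaf of microsheaf categories is unchanged: the added pair contributes a trivial extension that is absorbed by its neighbors in global sections. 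Gluing these local equivalences into a global one over $W_{t_0}$ uses the cosheaf-of-categories structure of $\mu sh$ along the skeleton.

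The main obstacle, I expect, is precisely this bifurcation analysis. Handle slides (Hamiltonian isotopies of critical submanifolds) are covered directly by propagation of microsupport, but birth--death events alter the topology of the core and require both an explicit categorical computation in a standard model and a nontrivial local-to-global gluing argument on $\mu sh_{W,\eta}$, performed uniformly with respect to the continuous family of Maslov data.
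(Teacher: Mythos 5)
First, note that Theorem \ref{mush invariance} is not proved in this paper at all: it is quoted from \cite{Nadler-Shende}, so there is no internal proof to compare against. Judged on its own terms, your outline begins correctly (locality in $t$ and the Moser identification $A_t^\ast\lambda_t=\lambda_{t_0}+dh_t$ reducing everything to a moving skeleton inside a fixed exact symplectic manifold), but the two steps that carry all the weight have genuine gaps. The appeal to a ``Guillermou--Kashiwara--Schapira sheaf-quantization argument'' for the Hamiltonian-isotopy stage does not go through as stated: GKS quantizes homogeneous Hamiltonian isotopies of cosphere/cotangent bundles acting on actual sheaf categories, whereas here $W$ is an arbitrary (stably polarized) Weinstein manifold and $\mu sh_{W,\eta}$ is only defined via the stabilization/embedding machinery of \cite{Shende-microlocal, Nadler-Shende}. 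To invoke anything GKS-like you must first conify or contactify, transport the isotopy through the embedding into a cosphere bundle, and control what happens at infinity for a noncompactly supported flow applied to a singular, noncompact skeleton. That transport is precisely the technical content of \cite{Nadler-Shende} (their ``gapped specialization'' theorem, which treats the whole family of skeleta uniformly rather than stratifying the time interval into smooth pieces and bifurcations), and it cannot be waved in as an adaptation.

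The second gap is the birth--death step. Showing that a cancelling pair of Weinstein handles leaves $\Gamma(W,\mu sh_{W,\eta})$ unchanged is not a routine local computation plus gluing: it is essentially a stop-removal/handle-cancellation theorem, a result of the same order of difficulty as the invariance statement itself (compare the corresponding theorems in \cite{GPS3}). In particular, the claim that ``the added pair contributes a trivial extension that is absorbed by its neighbors in global sections'' is exactly what needs proof, and the local-to-global step is delicate because $\mu sh_{\Lambda}$ is a sheaf (not naively a cosheaf) on the singular core, so global sections do not glue from a handle decomposition without further argument. A correct write-up would either follow \cite{Nadler-Shende} and avoid the bifurcation case analysis altogether via the specialization functor for gapped families, or would have to supply the handle-cancellation computation in full, at which point the argument is no longer short.
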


Here we will require some generalizations to the case when $M$ is a smooth Artin stack (aka Lie groupoid in the differentiable category).  
All assertions regarding sheaves on such a stack can be interpreted in terms of sheaves on a simplicial space presenting the stack. 

First let us discuss the microsupport.  Consider a chart on $M$; i.e. some smooth manifold (or algebraic variety) $X$ and 
Lie (or affine algebraic) group $G$, such that $X / G \hookrightarrow M$.  Recall that there is a 
natural Hamiltonian $G$ action on $T^*X$; we denote the moment map $m: T^*X \to \mathfrak{g}^*$.  The quotient
$p: m^{-1}(0) \to m^{-1}(0) / G$ provides a chart $m^{-1}(0) / G \hookrightarrow T^*M$.  

 By definition, sheaves $F$ on $M$ restrict to $G$-equivariant sheaves $F_X$ on $X$.  Forgetting the equivariance, the 
microsupport of the underlying sheaf $F_X$ on $X$ is necessarily a $G$-invariant closed conic subset of $m^{-1}(0)$; in particular, it
descends to a closed conic subset in $T^*M$, which is easily checked to be independent of the chart.   This defines a notion of microsupport for sheaves $F$ on $M$;
having done so, the formula \eqref{microsheaves} defining microsheaves makes sense verbatim.  
We can compute in the local charts as follows: 

\begin{lemma} \label{cotangent equivariant mush} 
On $T^* (X/G) =   m^{-1}(0)/G$, there is a
 canonical equivalence of sheaves of categories: $\mu sh  \simeq (p_* \mu sh_{m^{-1}(0)})^{G}$,
 where $p:m^{-1}(0) \to m^{-1}(0)/G = T^*(X/G)$ is the $G$-quotient. 
\end{lemma}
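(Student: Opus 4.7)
The approach is to match $\mu sh_{T^*(X/G)}$ and $(p_* \mu sh_{m^{-1}(0)})^G$ by checking the defining presheaves agree before sheafification on $m^{-1}(0)/G$. The two ingredients are descent for sheaves on the stack $X/G$, and the microlocal manifestation of equivariance. Specifically, I would invoke the descent identification $Sh(X/G) = Sh(X)^G$ (homotopy $G$-invariants for the natural pullback action, as defined via the simplicial presentation of the stack), together with the microlocal fact that any $G$-equivariant $F \in Sh(X)$ has $ss(F) \subset m^{-1}(0)$: equivariance forces sections to propagate along $G$-orbits, so tangent directions to orbits must annihilate $ss(F)$, placing it in the zero level of the moment map.

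For any open $U \subset m^{-1}(0)/G$ with preimage $\tilde U = p^{-1}(U) \subset m^{-1}(0) \subset T^*X$, I would unwind the defining presheaf of the left-hand side:
\[
\mu sh^{pre}_{T^*(X/G)}(U) = Sh(X/G) \big/ \{F : ss(F) \cap \tilde U = \emptyset\}.
\]
Combining the two ingredients above identifies this quotient with $\bigl( Sh_{m^{-1}(0)}(X) \big/ \{F : ss(F) \cap \tilde U = \emptyset\} \bigr)^G = \mu sh^{pre}_{m^{-1}(0)}(\tilde U)^G$. The step relies on the $G$-stability of the localizing subcategory (since $\tilde U$ is $G$-invariant) so that the Verdier quotient commutes with $(-)^G$.

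Finally, I would sheafify on the quotient $m^{-1}(0)/G$: since opens on the quotient correspond bijectively to $G$-invariant opens on $m^{-1}(0)$, the sheafification of this equivariant presheaf computes $(p_* \mu sh_{m^{-1}(0)})^G$ by standard equivariant sheaf-of-categories theory on simplicial presentations of stacks. Tracking naturality through the unwinding shows the resulting identification is canonical and independent of the chart, so it globalizes.

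The main obstacle I anticipate is the rigorous interchange of $(-)^G$ (a limit over $BG$) with both the Verdier localization and the sheafification (colimit-type constructions). This works here because the $G$-action preserves both the localizing subcategory and the system of opens involved, so $G$-stability ensures that the localization and sheafification descend cleanly to the equivariant setting. A closely analogous compatibility is implicit throughout \cite{Kashiwara-Schapira} and \cite{Nadler-Shende}, but verifying it for the present Artin stack setup, and checking independence of chart $X/G \hookrightarrow M$, is the technical heart of the argument.
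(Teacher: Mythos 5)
Your presheaf-level computation coincides exactly with the paper's: unwind $\mu sh^{pre}(U)$, use the descent identification $Sh(X/G) = Sh(X)^G$, observe that equivariance forces $ss(F_X) \subset m^{-1}(0)$, and conclude $\mu sh^{pre}(U) \simeq \mu sh^{pre}(p^{-1}(U))^G$. The gap is in your final step. Sheafifying over opens of $m^{-1}(0)/G$ produces the sheafification of $V \mapsto \mu sh^{pre}(V)^G$ over the topology of \emph{$G$-invariant} opens $V \subset m^{-1}(0)$, whereas the right-hand side of the lemma, $(p_*\mu sh_{m^{-1}(0)})^G$, is built from $\mu sh_{m^{-1}(0)}$, the sheafification of $\mu sh^{pre}$ over \emph{all} opens. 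For positive-dimensional $G$ the invariant opens are not a basis of the topology, so these two sheafifications do not obviously agree. This is not a formal consequence of the $G$-action preserving the localizing subcategory and the system of invariant opens: that only guarantees the constructions are well defined equivariantly, not that the limit $(-)^G$ commutes with the (colimit-type) sheafification, nor that refining from invariant covers to arbitrary covers leaves the sections unchanged. You correctly flag this interchange as the technical heart, but the justification you offer does not engage with the actual difficulty, which is a comparison of sheafifications over two different topologies.

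The paper closes exactly this gap with a concrete argument. Writing $\mu sh^+$ for the sheafification of $\mu sh^{pre}$ on $G$-invariant subsets of $T^*X$, one must show that the $G$-invariants of the natural map $\mu sh^+ \to \mu sh$ is an equivalence on invariant subsets. The inverse is produced using the comonad $T$ of $G$-equivariantization: any section $F$ of $\mu sh$ over an invariant subset has $T(F)$ a section of $\mu sh^+$, and when $F$ is $G$-equivariant the comonadic resolution $[\cdots \to T^2(F) \to T(F)] \to F$ exhibits $F$ as assembled from sections of $\mu sh^+$. Some device of this kind is needed to finish; as written, your argument establishes the equivalence only at the level of presheaves on invariant opens.
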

\begin{proof}
 By construction, 
for open $U \subset T^*(X/G)$,  we have:
\begin{eqnarray*}
\mu sh^{pre} (U) & = & Sh(X/G)/ \{F \,| \, ss(F) \cap U = \emptyset \} \\
& = & Sh(X)^G/ \{F_X \,| \, ss(F_X) \cap p^{-1}(U) = \emptyset \} \\
& = &
Sh_{m^{-1}(0)}(X)^G / \{F_X  \,| \, ss(F_X) \cap p^{-1}(U) = \emptyset \}  \\
& = & \mu sh^{pre}(p^{-1}(U))^G  
\end{eqnarray*}
 After sheafification we have $\mu sh(U) = \mu sh^+(p^{-1}(U))^G$, where $\mu sh^+$ denotes the result of sheafifying $\mush^{pre}$ on $G$-invariant subsets of $T^*X$.   It remains to check on $G$-invariant subsets of $T^*X$, the $G$-invariants of the natural map of pre-sheaves $\mu sh^+ \to \mu sh$ is an equivalence. 
We give an inverse using the canonical resolution associated to the  comonad of $G$-equivariantization. Namely, given any section $F$ of $\mu sh$ over a $G$-invariant subset, its $G$-equivariantization $T(F)$ will be a section of $\mu sh^+$;  if   $F$ is $G$-equivariant, then repeating the procedure gives a resolution $[\cdots \to  T^2(F) \to T(F)] \to F$ by sections of $\mu sh^+$.    
\end{proof} 

Now consider some open $V \subset T^*(X/G)$ such that $V$ is non-stacky.  
(Such $V$ need not project to the non-stacky locus of $X/G$.)  We can define microsheaves on $V$ by 
$\mu sh_{T^*(X/G)}|_V$.  However, $V$ is itself an exact symplectic manifold, so given a Maslov datum
for $V$ we can also define microsheaves as in \cite{Nadler-Shende}.   We would like to explain now
(how to choose a Maslov datum so) that these notions agree.  The reason this is relevant to our aims is that 
on the one hand we have a restriction map $Sh(T^*(X/G)) \to \mu sh_{T^*(X/G)}(V)$, and on the other, 
we will want to appeal to Theorem \ref{mush invariance} which does not directly apply to $\mu sh_{T^*(X/G)}(V)$, 
but rather to $\mu sh_{V, \eta}(V)$ for Maslov datum $\eta$.

\vspace{2mm}
We first study descent of polarizations.  
Consider a symplectic manifold $W$ and a Lie group $G$  acting symplectically on $W$. 
Let $\pi:W \to W/G$ denote the projection to the stack quotient. Note that $TW$ is canonically $G$-equivariant, 
or equivalently, there is a vector bundle $\cV\to W/G$ together with an isomorphism $\pi^*\cV \simeq TW$. 
In particular, the classifying maps $\tau$ and $\gamma$ factor through $\pi$ composed with the respective maps 
$\ol \tau: W/G\to BU$ and $\ol \gamma: W/G \stackrel{\ol \tau}{\to} BU \to B(U/O)$ associated to $\cV$.

\begin{defn}
A $G$-equivariant stable polarization of $W$ is a $G$-equivariant lift of $\tau$ along $BO\to BU$ or equivalently a $G$-equivariant  trivialization of $\gamma$. (Here $G$ acts trivially on the classifying spaces $BU, BO, B(U/O)$.) 
\end{defn}

Note 
 a $G$-equivariant stable polarization of $W$ is equivalent to a lift of $\ol \tau$ along $BO\to BU$ or equivalently a  trivialization of $\ol \gamma$.

\begin{lemma} \label{stable polarization descent} 
Let $\wt W$ be a symplectic manifold. Let $G$ be a Lie group, and consider a Hamiltonian $G$ action on $\wt W$ with moment map $m:\wt W\to \frg^*$. 
Assume $0 \in \frg^*$ is a good value of $m$, in particular that $0\in \frg^*$ is a regular value, so that the reduction $W = m^{-1}(0)/G$ is also a smooth symplectic manifold. 

Then $G$-equivariant stable polarizations of $\wt W$ near $m^{-1}(0)$ are canonically identified with  stable polarizations of  $W$.
\end{lemma}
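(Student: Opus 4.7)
The plan is to exhibit a canonical $G$-equivariant stable splitting of $T\wt W$ along $m^{-1}(0)$ into $\pi^*TW$ plus a canonically polarized trivial summand, so that the two lift problems to $BO$ match up.

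First, I would analyze the tangent bundle of $\wt W$ along $m^{-1}(0)$. The moment map differential $dm$ yields a canonical $G$-equivariant identification of the normal bundle of $m^{-1}(0)\subset\wt W$ with the trivial bundle $\frg^*_{\triv}$ (coadjoint action). Goodness of the value $0$ ensures that $\pi:m^{-1}(0)\to W$ is a principal $G$-bundle, whose vertical tangent bundle is canonically $\frg_{\triv}$ (adjoint action). Combining the normal bundle sequence for $i:m^{-1}(0)\hookrightarrow \wt W$ with the vertical/horizontal sequence for $\pi$, I would obtain a $G$-equivariant isomorphism of real vector bundles
\[ i^* T\wt W \;\simeq\; \pi^* TW \;\oplus\; \frg_{\triv} \;\oplus\; \frg^{*}_{\triv},\]
well-defined up to the contractible space of $G$-equivariant splittings of the vertical sequence. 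Passing to classifying maps, this gives a canonical $G$-equivariant homotopy $\tau_{\wt W}|_{m^{-1}(0)} \simeq \pi^*\tau_W \oplus \tau_{\frg_\CC}$ into $BU$, where $\frg_\CC = \frg \oplus i\frg$ carries the complexified adjoint action.

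Next, I would observe that the summand $\frg_\CC$ carries a canonical $G$-equivariant stable polarization: the real form $\frg \subset \frg_\CC$ is a $G$-invariant real Lagrangian (since it is preserved by the adjoint action), giving a canonical $G$-equivariant reduction of structure from $U$ to $O$ on this summand. Consequently, $G$-equivariant lifts of $\tau_{\wt W}|_{m^{-1}(0)}$ through $BO \to BU$ are canonically identified with $G$-equivariant lifts of $\pi^*\tau_W$ through $BO \to BU$. By equivariant descent along the principal $G$-bundle $\pi$, the latter are canonically identified with (non-equivariant) lifts of $\tau_W$ on $W$, i.e.\ with stable polarizations of $W$. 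Finally, any sufficiently small $G$-invariant tubular neighborhood of $m^{-1}(0)$ in $\wt W$ deformation retracts $G$-equivariantly onto $m^{-1}(0)$, so $G$-equivariant stable polarizations near $m^{-1}(0)$ and on $m^{-1}(0)$ agree, yielding the claimed canonical bijection.

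The main subtlety I expect is verifying that the various auxiliary choices assemble into a canonical (rather than merely existent) homotopy of classifying maps, so that the resulting map of lifting problems is genuinely an equivalence of spaces. This should follow because each auxiliary choice — a $G$-equivariant splitting of the vertical sequence, a $G$-invariant compatible almost complex structure on $T\wt W$ near $m^{-1}(0)$, or a $G$-invariant Hermitian metric on $\frg_\CC$ — lives in a $G$-equivariantly contractible space, as one sees by averaging over $G$ (which acts freely on $m^{-1}(0)$ by goodness of $0$).
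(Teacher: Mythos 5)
Your argument is correct and is essentially the paper's own proof: the same moment-map short exact sequence $\frg \oplus \pi^*TW \to T\wt W|_{m^{-1}(0)} \to \frg^*$, the same observation that the $\frg\oplus\frg^*$ summand carries a canonical Lagrangian and hence contributes nothing to the lifting problem along $BO\to BU$, and the same deformation-retract reduction from a neighborhood of $m^{-1}(0)$ to $m^{-1}(0)$ itself. The only cosmetic differences are that the paper descends to $W$ before splitting whereas you split $G$-equivariantly upstairs, and that you package the polarized summand as $\frg_\CC$ with real form $\frg$ rather than as $\frg\oplus\frg^*$ with its canonical Lagrangian $\frg^*$ --- the latter avoids invoking an invariant metric, and for non-compact $G$ the contractibility of the spaces of invariant auxiliary choices is better justified by freeness of the action on $m^{-1}(0)$ (invariant data equals data on the quotient) than by averaging.
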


\begin{proof}
Note $m^{-1}(0)$ is a deformation retract of a neighborhood so $G$-equivariant stable polarizations of   $\wt W$ near $m^{-1}(0)$ are equivalent to 
$G$-equivariant stable polarizations of  $\wt W$ along $m^{-1}(0)$. In what follows, we  will work  with $G$-equivariant stable polarizations along $m^{-1}(0)$ 
in place of near $m^{-1}(0)$. Moreover,
we will then work along $W = m^{-1}(0)/G$ rather than $G$-equivariantly along $m^{-1}(0)$.

Along  $m^{-1}(0)$, we have a $G$-equivariant short exact sequence of vector bundles
\beq
\xymatrix{
\frg_{ m^{-1}(0)}  \oplus p^* TW \ar[r] & T\wt W|_{m^{-1}(0)} \ar[r] & \frg^*_{ m^{-1}(0)}
}
\eeq
where $\frg_{ m^{-1}(0)}$ denotes the trivial  bundle with fiber $\frg$ with its adjoint action,
and  $\frg^*_{ m^{-1}(0)}$ the trivial  bundle with fiber $\frg^*$ with its coadjoint action.
The inclusion of $\frg_{ m^{-1}(0)}$ 
 comes from the infinitesimal $G$-action and the projection to $\frg^*_{ m^{-1}(0)}$ is induced by the moment map $m$.

Thus along  $W = m^{-1}(0)/G$, we have a short exact sequence of vector bundles
\beq
\xymatrix{
TW \oplus \cV \ar[r] & (T \wt W|_{m^{-1}(0)})/G  \ar[r] &     \cV^*
}
\eeq
where $\cV$ and $\cV^*$ denotes the respective descents of $ \frg_{ m^{-1}(0)}$ and $\frg^*_{ m^{-1}(0)}  $. 
As with any such short exact sequence of topological bundles, we may (non-canonically) split it to obtain an isomorphism of vector bundles 
\beq
\xymatrix{
(T \wt W|_{m^{-1}(0)})/G \simeq TW \oplus \cV\oplus     \cV^*
}
\eeq

Consider the stable classifying maps $\wt \tau: W \to BU$, $\tau: W \to BU$, and $\nu: W \to BU$ 
for the respective bundles $(T \wt W|_{m^{-1}(0)})/G$,   $TW$, and $ \cV \oplus \cV^*$. 

We have $\wt \tau = \tau + \nu$. Since the polarization $\cV^* \subset \cV \oplus \cV^*$ gives a trivialization of $\nu$, 
we see trivializations of $\wt \tau$ are the same as trivializations of $ \tau$.
\end{proof}

\begin{corollary} \label{fiber polarization restriction}
Let $X$ be a manifold carrying the action of a Lie group $G$.  Let $V \subset T^*(X/G)$ be an open non-stacky
subset.  Then the fiber polarization on $T^*X$ descends canonically to a stable polarization on $V$. 

More generally, for any smooth stack $M$ and non-stacky $V \subset T^*M$, there is a canonical stable polarization
on $V$ given in charts as above. 
\end{corollary}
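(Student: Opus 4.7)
The plan is to apply Lemma \ref{stable polarization descent} directly. Set $\wt W = T^*X$ with its canonical Hamiltonian $G$-action and moment map $m: T^*X \to \frg^*$; the fiber polarization of $T^*X$ (the vertical Lagrangian distribution of $T^*X \to X$) is manifestly $G$-equivariant, since the $G$-action on $T^*X$ lifts the action on $X$ and in particular preserves the projection to the base. This fiber polarization thus determines a $G$-equivariant stable polarization of $T^*X$ in the sense of the preceding definition.

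For the case $M = X/G$: since $V \subset T^*(X/G)$ is non-stacky, $G$ acts freely on $p^{-1}(V) \subset m^{-1}(0)$, so one may choose a $G$-invariant open neighborhood $U \subset T^*X$ of $p^{-1}(V)$ on which $0 \in \frg^*$ is a good value of $m|_U$. Applying Lemma \ref{stable polarization descent} to $U$, the $G$-equivariant fiber polarization (restricted to $U$) descends canonically to a stable polarization on $V \simeq (m|_U)^{-1}(0)/G$. Canonicity here is built into the statement of the lemma, which provides a bijective identification between $G$-equivariant stable polarizations near $p^{-1}(V)$ and stable polarizations on $V$, and the fiber polarization is a canonically chosen element of the former set.

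For a general smooth stack $M$, choose an atlas by charts of the form $X_i/G_i \hookrightarrow M$ and apply the previous step on each chart to obtain a stable polarization on $V_i := V \cap T^*(X_i/G_i)$. On overlaps, pass to a common refinement chart $X/G \hookrightarrow (X_i/G_i) \times_M (X_j/G_j)$; the fiber polarizations of $T^*X_i$ and $T^*X_j$ pull back (via the natural $G$-equivariant morphisms of cotangent bundles induced by the smooth maps to $X_i$ and $X_j$) to the fiber polarization of $T^*X$, and the canonicity of Lemma \ref{stable polarization descent} yields the required identification of the descended stable polarizations on $V_i \cap V_j$.

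The main obstacle is the coherence of the descended polarizations under change of chart in the general stack case: strictly speaking, one must verify that the canonical identifications of Lemma \ref{stable polarization descent} assemble into higher coherence data over the nerve of an atlas. Given that each ingredient of the construction (the fiber polarization and the reduction procedure of Lemma \ref{stable polarization descent}) is natural with respect to pullback along equivariant smooth morphisms, this coherence is a formal consequence, but a fully rigorous treatment would require unpacking the proof of the lemma to exhibit its naturality explicitly as a map of simplicial diagrams.
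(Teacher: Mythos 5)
Your proposal is correct and takes essentially the same route as the paper, whose entire proof is the single line ``apply Lemma \ref{stable polarization descent} with $W = V$ and $\wt{W} = p^{-1}(V) \subset m^{-1}(0)$,'' using exactly the $G$-equivariant fiber polarization as input. Your additional remarks on freeness of the action over the non-stacky locus and on chart-coherence for a general stack are more careful than anything the paper records (it leaves the general-stack case entirely to the phrase ``given in charts as above''), and they do not change the substance of the argument.
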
 
\begin{proof}
We apply Lemma \ref{stable polarization descent} with $W = V$ and $\wt {W} = p^{-1}(V) \subset m^{-1}(0)$. 
\end{proof}
\begin{remark}
Note that by contrast it is not clear how to descend the fiber polarization on $T^*X$ to a polarization on $V$, since the fiber
polarization is not generally transverse to $\mu^{-1}(0)$. 
\end{remark}

We turn to microsheaves.  We begin by observing their functoriality under contactomorphisms respecting the Maslov datum.

\begin{lemma} \label{topological action} 
The group of Hamiltonian Maslov contactomorphisms or conic exact symplectomorphisms acts topologically on microsheaves.
\end{lemma}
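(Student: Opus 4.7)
The plan is to assemble the action from two ingredients: (a) the strict functoriality of the sheaf of microsheaf categories under diffeomorphisms that preserve the geometric structure (contact/conic-symplectic) and the Maslov datum, and (b) the Weinstein invariance theorem (Theorem~\ref{mush invariance}) applied to parameterized families in order to promote this strict action into a \emph{topological} one.

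First I would pin down the action of a single element. If $\varphi$ is a contactomorphism (resp.\ conic exact symplectomorphism) of $V$, pushforward of sheaves of categories gives an equivalence $\varphi_* : \mu sh_{V,\eta} \xrightarrow{\sim} \mu sh_{V,\varphi_*\eta}$, and a choice of identification $\varphi_*\eta \simeq \eta$ (part of the data of being Maslov) upgrades this to a self-equivalence of $\mu sh_{V,\eta}$. This is functorial in $\varphi$ by construction, giving at this stage an action of the underlying discrete group on $\Gamma(V,\mu sh_{V,\eta})$.

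The topological enhancement comes by showing that a continuous path $\{\varphi_t\}_{t\in[0,1]}$ through the identity induces a path of self-equivalences canonically homotopic to the identity, and more generally that a continuous family of such contactomorphisms parameterized by a test space $B$ produces a continuous family of self-equivalences. To see this, I would form the constant family of contact (or conic symplectic) manifolds $V \times B \to B$ equipped with the continuously varying family of Maslov data $\varphi_t^* \eta$. The hypothesis on $\varphi_t$ gives a continuous path in the space of Maslov data from $\varphi_0^*\eta = \eta$ to $\varphi_t^*\eta \simeq \eta$. Applying Theorem~\ref{mush invariance} to this family (in its contact/Weinstein formulation) yields that $\Gamma(V, \mu sh_{V,\varphi_t^*\eta})$ is locally constant in $t$, with parallel-transport identifications given tautologically by $\varphi_t^*$. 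This is precisely the statement that the action is topological.

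The main obstacle I expect is coherence: promoting the assignment $\varphi \mapsto \varphi_*$ to a map of $\infty$-groups rather than just an action up to homotopy, compatibly with composition and the parameterization. This should be handled by realizing $\varphi_*$ via the kernel construction of \cite{Nadler-Shende}, which is symmetric-monoidal with respect to composition of contactomorphisms, and by applying Theorem~\ref{mush invariance} to the universal family over a classifying simplicial set for the group rather than over a single simplex; the invariance theorem then assembles the local constancy into a coherent continuous action.
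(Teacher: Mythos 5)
Your treatment of a single contactomorphism (pushforward of the sheaf of categories plus the chosen homotopy $\varphi^*\eta \simeq \eta$) matches the paper's construction on $0$-cells. The gap is in the topological enhancement. You propose to apply Theorem~\ref{mush invariance} to the constant family $V \times B$ with varying Maslov data, but that theorem concerns families of \emph{Weinstein} manifolds and the \emph{global sections} category of microsheaves supported on the cores; the lemma here assumes no Weinstein structure (it is a statement about arbitrary co-oriented contact or conic exact symplectic manifolds) and the action being constructed lives at the level of the sheaf of categories $\mu sh_{W,\eta}$ itself, i.e.\ one needs isomorphisms $a^*\mu sh_{W,\eta} \cong \mu sh_{W,\eta}$ of sheaves over $W$, coherently over the simplices of the group. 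Indeed the paper's remark immediately after the lemma explicitly distinguishes these two situations: the Weinstein-type specialization arguments would only give an action on core-supported global sections, for a different class of symplectomorphisms, and are deliberately not what is used here. Moreover, even where the invariance theorem applies, your claim that the parallel transport it provides is ``tautologically given by $\varphi_t^*$'' is precisely the assertion that requires proof --- identifying the continuity isomorphism with the discrete pullback action is the entire content of the topological enhancement, and nothing in the invariance theorem hands you that identification.

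What the paper does instead on $1$-cells (and, analogously, on higher cells) is a Hamiltonian suspension trick that stays entirely within the conic/contact functoriality already available. Given a path $g_t$ generated by a time-dependent Hamiltonian $H_t$, the stabilization $W \times T^*I \simeq W \times I \times \RR$ carries the single automorphism $(w,t,\xi) \mapsto (g_t(w), t, \xi + H_t(w))$ as well as the translation $(w,t,\xi)\mapsto (w,t,\xi - H_t(w))$; composing the two and using the $1$-cell datum $g_t^*\eta \cong \pi^*\eta$ yields an isomorphism $g_t^*\mu sh_{W, g_t^*\eta} \simeq \mu sh_{W,\pi^*\eta} \boxtimes Loc_I$ over $W \times I$, which is exactly the coherence between the endpoint identifications. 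If you want to salvage your approach, this suspension step is the ingredient you would need to add; the appeal to Theorem~\ref{mush invariance} and to a kernel formalism can then be dropped.
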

\begin{proof}
We write here in the contact case, the conic exact symplectic setting follows by an analogous argument. 
For the notion of Hamiltonian contactomorphism, see e.g. \cite[2.3]{Geiges-contact};
note this requires $W$ to be co-oriented.  
So fix a co-oriented contact manifold $W$,
and let $H$ be the topological group of Hamiltonian contactomorphisms.  We regard it as a simplicial set
whose 0-cells are Hamiltonian contactomorphisms, 1-cells are 1-parameter families of Hamiltonian contactomorphisms, etcetera.

Fix Maslov data $\eta$ on $W$.  
We write $\widetilde{H}$ for the corresponding topological group whose 0-cells are elements $h \in H$ plus 
homotopies $h^*\eta \sim \eta$, etcetera. 

Let $A$ be the topological group whose 0-cells are a homeomorphism $a: W \to W$ and an 
isomorphism $a^* \mu sh_{W, \eta} \cong \mu sh_{W, \eta}$, 
whose 1-cells are homotopies of homeomorphisms $a: I \times W \to W$ 
along with isomorphisms $a^* \mu sh_{W, \eta} \cong \pi^* \mu sh_{W, \eta}$ 
where $\pi: I \times W \to W$ is the projection; etcetera. 

The lemma asserts that there is a morphism of topological groups $\widetilde{H} \to A$.  
On 0-cells this is given by the fact that $\mu sh_{W, \eta}$ is (up to contractible choices) canonically 
determined by $(W, \eta)$; so a contactomorphism $a$ and homotopy $\eta \to a^* \eta$ determines an isomorphism
$a^* \mu sh_{W, \eta} \cong \mu sh_{W, a^* \eta}$, functorial in composition of $a$.  

Now let us explain what to do on $1$-cells; morphisms on higher cells are constructed analogously.  
Let $H_t:I \times W \to \RR$ be the time-dependent Hamiltonian generating the  path of
 automorphisms  $g_t g_0^{-1}:I \to G$. Then the stabilization $W \times T^*I \simeq W \times I \times \RR$ admits
 the automorphism
$(w, t, \xi) \mapsto (g_t(w), t, \xi + H_t(w))$. It also admits the translation
automorphism
$(w, t, \xi) \mapsto (w, t, \xi - H_t(w))$. From the two, plus the data from the 1-cell of $\widetilde{H}$ of a homotopy
$g_t^* \eta \cong \pi^* \eta$, we deduce an isomorphism
$g_t^*\mu sh_{W, g_t^* \eta} \simeq \mu sh_{W, \pi^* \eta} \boxtimes Loc_{I}$ over $W\times I$.
\end{proof} 

\begin{remark}
In Lemma \ref{topological action}, we {\em do not} claim an action of the group of {\em eventually} conic Hamiltonian symplectomorphisms
in the exact symplectic (e.g. Weinstein) case.   The gapped specialization theorem of \cite{Nadler-Shende}, could be used to construct such an action on 
the global sections category of microsheaves supported in the core, in the Weinstein case, for sufficiently Weinstein Hamiltonian symplectomorphisms.  
\end{remark}

\begin{corollary}
If $W$ is contact co-oriented or exact symplectic, and $G$ is a Lie group of Hamiltonian contactomorphisms or exact conic symplectoorphisms,
and $\eta$ is $G$-equivariant Maslov data for $W$, then $\mu sh_{W, \eta}$ is $G$-equivariant.  In particular, 
there is a $G$ action on its global sections.  
\end{corollary}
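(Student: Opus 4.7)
The plan is to deduce this directly from Lemma \ref{topological action} by producing a map of topological groups $G \to \widetilde{H}$ that lifts the tautological inclusion $G \hookrightarrow H$ coming from the hypothesis that $G$ acts by Hamiltonian contactomorphisms (resp.\ exact conic symplectomorphisms). Once we have such a lift, composition with the topological action morphism $\widetilde{H} \to A$ constructed in the proof of Lemma \ref{topological action} yields a morphism of topological groups $G \to A$, which by definition of $A$ is precisely the data of a $G$-equivariant structure on $\mu sh_{W, \eta}$.

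The key step is to unpack what $G$-equivariance of $\eta$ means in terms of null-homotopies. Recall $\eta$ is a null-homotopy of $W \xrightarrow{\tau} BU \to B(U/O) \xrightarrow{B\mu} B^2 \Pic(\mathcal{C})$. A $G$-equivariant such null-homotopy is by construction a choice, for each $g \in G$, of a homotopy $g^* \eta \simeq \eta$, coherently compatible with composition of group elements and depending smoothly on $g$ (i.e.\ extending to families in higher simplicial degree). This is precisely the extra data required by the definition of $\widetilde{H}$ on $k$-cells, so we obtain the desired simplicial map $G \to \widetilde{H}$ commuting with the projections to $H$.

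Having constructed the map $G \to A$, the first claim of the corollary follows. For the second claim, $G$-equivariance of the sheaf of categories $\mu sh_{W, \eta}$ means in particular that for each $g \in G$ we have an isomorphism $g^* \mu sh_{W,\eta} \simeq \mu sh_{W, \eta}$, coherently in $g$; applying the (functorial) global sections functor $\Gamma(W, -)$ produces the desired $G$-action on $\Gamma(W, \mu sh_{W, \eta})$.

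The main obstacle is really only of a bookkeeping nature: verifying that the definition of $G$-equivariant Maslov datum given in the paper corresponds, simplicial level by simplicial level, to the higher coherence data entering the definition of $\widetilde{H}$. Once this translation is made, no further geometric input is needed, and the result follows formally from Lemma \ref{topological action}.
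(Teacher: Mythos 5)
Your proposal is correct and matches the paper's intent: the corollary is stated without a separate proof precisely because it is meant to follow from Lemma \ref{topological action} by the lift $G \to \widetilde{H}$ you describe, with the $G$-equivariant Maslov datum supplying the coherent homotopies $g^*\eta \simeq \eta$ needed on each simplicial level. The passage to global sections is the same formal step in both cases.
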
 

\begin{remark} 
It is clear from the construction that $W = T^*X$, the $G$ action is the canonical lift of a $G$ action on $X$, and the Maslov data induce from the 
(evidently $G$-equivariant) fiber polarization of $T^*X$, 
the action described
in Lemma \ref{topological action} is the microlocalization of the usual $G$-action on sheaves on $X$. 
\end{remark} 
 
Suppose now $W$ is contact or exact symplectic, and $p: P \to W$ is a principal $G$-bundle. 
As a special case of \cite[Sec. 10]{Nadler-Shende}, the relative cotangent bundle $\wt p:\wt W := T^*p \to W$ gives a
 $T^*G$ fibration which
  is correspondingly contact
or exact symplectic.   
Then a stable polarization or Maslov datum on $W$ pulls back to a 
$G$-equivariant corresponding such structure on $\wt W$.  
Then we have a canonical identification
$\mu sh_{P, \tilde{p}^* \eta} \simeq p^*\mu sh_{W, \eta}$, 
  the pushforward $p_* \mu sh_{P, \tilde{p}^* \eta}$ carries a $G$-action, and pullback induces a map 
of sheaves
$$
\mu sh_{W, \eta} \to (p_* \mu sh_{P, \tilde{p}^* \eta})^G
$$

\begin{lemma} 
For $p: P \to W$ as above and $\eta$ a Maslov datum for $W$, 
the natural map is an isomorphism
\begin{equation}
\label{equivariant mush} 
\mu sh_{W, \eta} \isom (p_* \mu sh_{P, \tilde{p}^* \eta})^G
\end{equation}  
\end{lemma}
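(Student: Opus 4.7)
The plan is to reduce to a local trivialization of $P \to W$ and then compute directly using a Künneth formula. Both sides of \eqref{equivariant mush} are sheaves of categories on $W$, so the statement is local on $W$. Choose an open cover of $W$ over which $P$ trivializes; after restricting to such an open $U$, we may assume $P|_U = U \times G$ with $G$ acting by right translation on the second factor. Then the relative cotangent splits as $\tilde W|_U \simeq U \times T^*G$ with the product exact symplectic (resp.\ contact) structure, $\tilde p$ becomes projection to the first factor, and the Maslov datum $\tilde p^*\eta$ becomes the exterior product of $\eta|_U$ with the canonical fiber-polarization Maslov datum on $T^*G$.

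Next I would apply the external product formula for microsheaves on products of exact symplectic (resp.\ contact) manifolds from \cite{Nadler-Shende}, obtaining
$$\mu sh_{\tilde W, \tilde p^* \eta}\big|_U \simeq \mu sh_{W, \eta}|_U \boxtimes \mu sh_{T^*G}.$$
Pushing forward along $\tilde p$, the second factor contributes its global sections, which by \eqref{only one mush} is $Sh(G)$. As sheaves of categories on $U$ this reads
$$\tilde p_* \mu sh_{\tilde W, \tilde p^* \eta}\big|_U \simeq \mu sh_{W, \eta}|_U \otimes Sh(G).$$
Now take $G$-invariants: the action is only on the $Sh(G)$ factor, and by the remark following Lemma \ref{topological action} it coincides with the usual translation action of $G$ on $Sh(G)$. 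Since $G$ acts freely and transitively on itself, $Sh(G)^G \simeq \cC$, which yields the desired local identification $(\tilde p_* \mu sh_{\tilde W, \tilde p^* \eta})^G|_U \simeq \mu sh_{W, \eta}|_U$.

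The main obstacle is to check that these local identifications agree with the canonical pullback-pushforward-invariants map appearing on the right-hand side of \eqref{equivariant mush}, and that they glue under changes of trivialization of $P$. This requires carefully tracking the topological $G$-action on microsheaves from Lemma \ref{topological action} through the Künneth identification, in the spirit of the comonadic descent argument already employed in the proof of Lemma \ref{cotangent equivariant mush}: the canonical resolution by equivariantization identifies $G$-invariants with microsheaves on the quotient, and one applies that identification here with the $T^*G$-factor playing the role of the group-direction.
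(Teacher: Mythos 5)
Your route is genuinely different from the paper's. The paper does not trivialize $P$ locally at all: it observes that an isomorphism of sheaves on $W$ may be checked after pullback along the cover $p:P\to W$, and that the pulled-back map
$\mu sh_{P,\tilde p^*\eta}\simeq p^*\mu sh_{W,\eta}\to (p^*p_*\mu sh_{P,\tilde p^*\eta})^G$
is the identity by the standard descent identity $(p^*p_*\mathcal F)^G\simeq \mathcal F$ for a principal bundle. That argument is three lines and never needs to know what the fiberwise category is; your argument instead computes the fiber contribution explicitly ($\Gamma(T^*G,\mu sh_{T^*G})\simeq Sh(G)$, then $Sh(G)^G\simeq\cC$), which is more concrete and arguably more illuminating about \emph{why} the statement holds, at the cost of more moving parts.

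Two of those moving parts deserve to be called out as gaps rather than routine verifications. First, the ``external product formula for microsheaves'' you invoke --- an equivalence of sheaves of categories $\mu sh_{\wt W,\tilde p^*\eta}|_U\simeq \mu sh_{W,\eta}|_U\boxtimes \mu sh_{T^*G}$, followed by pulling $\otimes\, Sh(G)$ out of the pushforward and then out of the $G$-invariants --- is not an off-the-shelf citation: sheafification, $\tilde p_*$, and $(-)^G$ are all limits, and tensor products of presentable categories do not commute with limits in general, so each of these interchanges needs a separate justification (dualizability of $Sh(G)$, descent for the free action, etc.). Second, the step you label ``the main obstacle'' is in fact the whole content: once you decide to verify the \emph{canonical} map locally, no gluing is needed (it is a globally defined map of sheaves), but you do have to identify that canonical unit-of-adjunction map with your K\"unneth computation on each chart, and you give no argument for this. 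The paper's proof makes both issues evaporate by never leaving the global principal bundle; if you want to keep your approach, the cleanest fix is to replace ``glue the local identifications'' by ``check the canonical map is an equivalence on each trivializing chart,'' and then supply the identification of that map with the K\"unneth unit explicitly.
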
 
\begin{proof} 
 It suffices to check after pullback to $P$, i.e. that the induced map
 $$
\mu sh_{P, \tilde{p}^* \eta} \simeq p^*\mu sh_{W, \eta} \to p^*(p_* \mu sh_{P, \tilde{p}^* \eta})^G \simeq (p^*p_* \mu sh_{P, \tilde{p}^* \eta})^G
$$
is an isomorphism. By standard identities, we have 
  $$
(p^*p_* \mu sh_{P, \tilde{p}^* \eta})^G \simeq \mu sh_{P, \tilde{p}^* \eta}
$$
so that the map induced by \eqref{equivariant mush} is the identity.
\end{proof} 

\begin{corollary} \label{weinstein subset}
Let $M$ be a smooth stack and 
$U \subset T^*(X/G)$ be a nonstacky open subset.  Let $\eta$ be the Maslov datum on $U$ corresponding to the 
stable polarization from Corollary \ref{fiber polarization restriction}.  Then the natural map is an isomorphism 
$\mu sh_{T^*(X/G)}|_U \isom \mu sh_{U, \eta}$.   
\end{corollary}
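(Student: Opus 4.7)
The plan is to realize both sides of the desired isomorphism as $G$-invariants of a common sheaf of categories on $\tilde U := p^{-1}(U) \subset m^{-1}(0)$, which is a principal $G$-bundle over $U$ since $U$ is nonstacky.

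First I apply Lemma \ref{cotangent equivariant mush} and restrict to $U$. Combined with the canonical identification \eqref{only one mush}, which identifies the standard $\mu sh$ on $T^*X$ with $\mu sh_{T^*X,\phi}$ for $\phi$ the canonical fiber polarization, this rewrites the left-hand side as
\[
\mu sh_{T^*(X/G)}|_U \;\simeq\; \bigl(p_*\,\mu sh_{T^*X,\phi}|_{\tilde U}\bigr)^G.
\]

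Next I apply the equivariant descent isomorphism \eqref{equivariant mush} to the principal $G$-bundle $p: \tilde U \to U$, which rewrites the right-hand side as
\[
\mu sh_{U,\eta} \;\simeq\; \bigl(p_*\,\mu sh_{\tilde U,\tilde p^*\eta}\bigr)^G.
\]
To see the two expressions coincide, I use that $\eta$ was constructed in Corollary \ref{fiber polarization restriction} (via Lemma \ref{stable polarization descent}) precisely as the descent of the $G$-equivariant stable polarization induced by $\phi$ near $m^{-1}(0)$. By naturality of that construction, $\tilde p^*\eta$ is canonically homotopic, as a Maslov datum on $\tilde U$, to the restriction of $\phi$; consequently $\mu sh_{\tilde U,\tilde p^*\eta} \simeq \mu sh_{T^*X,\phi}|_{\tilde U}$, and the two right-hand sides are identified.

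The main obstacle is justifying the use of \eqref{equivariant mush} in our situation: that statement is formulated for a principal $G$-bundle whose symplectic total space is a relative cotangent bundle $T^*p$, whereas $\tilde U$ arises coisotropically inside the ambient $T^*X$ rather than a priori in that form. I will address this by a local argument. About any point of $\tilde U$, the $G$-equivariant splitting $(T(T^*X)|_{m^{-1}(0)})/G \simeq TU \oplus \cV \oplus \cV^*$ from the proof of Lemma \ref{stable polarization descent}, together with a $G$-equivariant Darboux-type normal-form result, identifies a $G$-invariant neighborhood of $\tilde U$ in $T^*X$ with an open subset of some $T^*(P' \to U')$ to which \eqref{equivariant mush} directly applies. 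The resulting local isomorphisms glue by naturality, yielding the stated global comparison.
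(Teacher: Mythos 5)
Your proposal follows the paper's own proof: both sides are rewritten as $G$-invariants of pushforwards, via Lemma \ref{cotangent equivariant mush} together with \eqref{only one mush} on one side and formula \eqref{equivariant mush} on the other, with the match of Maslov data supplied by Lemma \ref{stable polarization descent}. The paper states this in one line; your extra care in justifying the applicability of \eqref{equivariant mush} through an equivariant coisotropic normal form is a sensible elaboration of a point the paper leaves implicit, not a different route.
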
 
\begin{proof}
We may check locally after pullback to $T^*X$, applying   
Lemma \ref{cotangent equivariant mush} on one side and formula \eqref{equivariant mush} on the other 
in order to reduce to~\eqref{only one mush}. 
\end{proof} 

\begin{proposition} \label{invariance} 
Let $M$ be a smooth stack, and $\Lambda \subset T^*(M \times \R)$ a conic subset.  Suppose 
$U \subset T^*(M \times \R)$ be an open subset containing $\Lambda$.  Assume the symplectic reductions $U_t$ over $t \in \R$ 
determine a deformation of Weinstein manifolds with cores $\Lambda_t$.  Then $\mu sh_{T^*M}(\Lambda_t)$ is independent of $t$. 
\end{proposition}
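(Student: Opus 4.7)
The plan is to reduce the statement to the Weinstein invariance Theorem \ref{mush invariance} applied to the family $\{U_t\}_{t \in \R}$, using Corollary \ref{weinstein subset} to identify microsheaves on $T^*M$ supported in $\Lambda_t$ with the intrinsic microsheaf category of the Weinstein manifold $U_t$.

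The first step is to produce a continuous family of Maslov data $\eta_t$ on $\{U_t\}_{t \in \R}$. Since $M \times \R$ is a smooth stack, its cotangent $T^*(M \times \R)$ carries a canonical fiber polarization, and by Corollary \ref{fiber polarization restriction} any nonstacky open subset inherits from it a canonical stable polarization. Because each $U_t$ is Weinstein, hence smooth, the relevant portion of $U$ is nonstacky, and Lemma \ref{stable polarization descent} applied to the $\R$-translation symplectic reduction producing $U_t$ from the slice $U \cap \{t = \mathrm{const}\}$ yields a canonical stable polarization, and hence a Maslov datum $\eta_t$, on each $U_t$. Since this construction is natural, the $\eta_t$ assemble into a continuous family.

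Next, Corollary \ref{weinstein subset} applied to each $U_t$ gives an equivalence of sheaves of categories $(\mu sh_{T^*M})|_{U_t} \simeq \mu sh_{U_t, \eta_t}$. Passing to the full subsheaf of objects supported on the core $\Lambda_t$ and taking global sections, we obtain
\[
\mu sh_{T^*M}(\Lambda_t) \;\simeq\; \Gamma\bigl(U_t, (\mu sh_{U_t, \eta_t})_{\Lambda_t}\bigr) \;\simeq\; \Gamma(U_t, \mu sh_{U_t, \eta_t}),
\]
the last equivalence because on a Weinstein manifold every global microsheaf is automatically supported on the core (anything off the core is swept to infinity by the Liouville flow and must vanish by conicity). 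Theorem \ref{mush invariance} then asserts that this last category is locally constant in $t$, and hence constant since $\R$ is connected.

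I expect the main technical obstacle to be the parametrized construction in the first step. Lemma \ref{stable polarization descent} provides a stable polarization for each fixed $t$ via a non-canonical splitting of a short exact sequence of vector bundles, and for the application to Theorem \ref{mush invariance} one must ensure these splittings fit into a genuinely continuous family, rather than merely a pointwise assignment. This should be straightforward because the space of splittings is contractible and the entire procedure can be carried out globally on the total space of the family inside $T^*(M \times \R)$, but it requires some care unpacking the construction of Lemma \ref{stable polarization descent} in families.
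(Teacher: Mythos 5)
Your proof is correct and follows essentially the same route as the paper, whose entire proof is the one-line observation that Corollary \ref{weinstein subset} reduces the statement to Theorem \ref{mush invariance}. Your additional care in assembling the descended stable polarizations into a continuous family of Maslov data $\eta_t$ (and the remark that the support of $\mu sh_{U_t,\eta_t}$ is the core $\Lambda_t$) is a reasonable unpacking of what that one line leaves implicit.
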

\begin{proof}
Using Corollary \ref{weinstein subset}, the result follows from Theorem \ref{mush invariance}. 
\end{proof}

\bibliographystyle{plain}
\bibliography{refs}

\begin{thebibliography}{10}

\bibitem{AOV}
Dan Abramovich, Martin Olsson, and Angelo Vistoli.
\newblock Tame stacks in positive characteristic.
\newblock In {\em Annales de l'institut Fourier}, volume~58, pages 1057--1091,
  2008.

\bibitem{Beilinson-Drinfeld}
Alexander Beilinson and Vladimir Drinfeld.
\newblock Quantization of {H}itchin's integrable system and {H}ecke
  eigensheaves.
\newblock {\em
  \url{http://math.uchicago.edu/~drinfeld/langlands/QuantizationHitchin.pdf}},
  1991.

\bibitem{BenZvi-Nadler}
David Ben-Zvi and David Nadler.
\newblock Betti geometric {L}anglands.
\newblock In {\em Proc. Symp. Pure Math.}, volume~97, pages 3--41, 2018.

\bibitem{Biquard}
Olivier Biquard.
\newblock Fibr{\'e}s paraboliques stables et connexions singulieres plates.
\newblock {\em Bulletin de la Soci{\'e}t{\'e} Math{\'e}matique de France},
  119(2):231--257, 1991.

\bibitem{Cieliebak-Eliashberg}
Kai Cieliebak and Yakov Eliashberg.
\newblock {\em From Stein to Weinstein and back: symplectic geometry of affine
  complex manifolds}.
\newblock American Mathematical Soc., 2012.

\bibitem{Corlette}
Kevin Corlette.
\newblock Flat {G}-bundles with canonical metrics.
\newblock {\em J. Diff. Geom.}, 28(3):361--382, 1988.

\bibitem{perverse-microsheaves}
Laurent C{\^o}t{\'e}, Christopher Kuo, David Nadler, and Vivek Shende.
\newblock Perverse microsheaves.
\newblock {\em arXiv:2209.12998}.

\bibitem{Donagi-Pantev-duality}
Ron Donagi and Tony Pantev.
\newblock Langlands duality for {H}itchin systems.
\newblock {\em Invent. math. arXiv: math/0604617}, 2012.

\bibitem{Donaldson}
Simon Donaldson.
\newblock Twisted harmonic maps and the self-duality equations.
\newblock {\em Proc. London Math. Soc.}, 3(1):127--131, 1987.

\bibitem{Faltings}
Gerd Faltings.
\newblock Stable {G}-bundles and projective connections.
\newblock {\em J. Alg. Geom.}, 2(3):507--568, 1993.

\bibitem{Frankel}
Theodore Frankel.
\newblock Fixed points and torsion on {K}\"ahler manifolds.
\newblock {\em Annals of Mathematics}, pages 1--8, 1959.

\bibitem{GPS3}
Sheel Ganatra, John Pardon, and Vivek Shende.
\newblock Microlocal {M}orse theory of wrapped {Fukaya} categories.
\newblock {\em arXiv:{1809.08807}}.

\bibitem{Geiges-contact}
Hansj{\"o}rg Geiges.
\newblock Contact geometry.
\newblock {\em arXiv preprint math/0307242}, 2003.

\bibitem{Ginzburg}
Victor Ginzburg.
\newblock The global nilpotent variety is {L}agrangian.
\newblock {\em Duke Math. J.}, 109(3):511--519, 2001.

\bibitem{Hausel-Thaddeus}
Tam{\'a}s Hausel and Michael Thaddeus.
\newblock Mirror symmetry, {L}anglands duality, and the {H}itchin system.
\newblock {\em Inventiones mathematicae}, 153(1):197--229, 2003.

\bibitem{Hitchin}
Nigel Hitchin.
\newblock The self-duality equations on a {R}iemann surface.
\newblock {\em Proc. London Math. Soc.}, 3(1):59--126, 1987.

\bibitem{Hitchin-system}
Nigel Hitchin.
\newblock Stable bundles and integrable systems.
\newblock {\em Duke Math. J.}, 54(1):91--114, 1987.

\bibitem{Kapustin-Witten}
Anton Kapustin and Edward Witten.
\newblock Electric-magnetic duality and the geometric {L}anglands program.
\newblock {\em hep-th/0604151}.

\bibitem{Kashiwara-Schapira}
Masaki Kashiwara and Pierre Schapira.
\newblock {\em Sheaves on Manifolds}.
\newblock Springer, 1990.

\bibitem{Konno-parabolic-higgs}
Hiroshi Konno.
\newblock Construction of the moduli space of stable parabolic {H}iggs bundles
  on a {R}iemann surface.
\newblock {\em Journal of the Mathematical Society of Japan}, 45(2):253--276,
  1993.

\bibitem{Laumon}
G{\'e}rard Laumon.
\newblock Un analogue global du c{\^o}ne nilpotent.
\newblock {\em Duke Math. J.}, 57(2):647--671, 1988.

\bibitem{Nadler-Shende}
David Nadler and Vivek Shende.
\newblock Sheaf quantization in {W}einstein symplectic manifolds.
\newblock {\em arXiv:2007.10154}.

\bibitem{NT}
David Nadler and Jeremy Taylor.
\newblock The {W}hittaker functional is a shifted microstalk.
\newblock {\em arXiv:2206.09216}, 2022.

\bibitem{Nakajima-parabolic-higgs}
Hiraku Nakajima.
\newblock Hyper-k{\"a}hler structures on moduli spaces of parabolic {H}iggs
  bundles on {R}iemann surfaces.
\newblock {\em Lecture notes in pure and applied mathematics}, pages 199--208,
  1996.

\bibitem{Shende-fibers}
Vivek Shende.
\newblock Microsheaves from {H}itchin fibers via {F}loer theory.
\newblock {\em arXiv:2108.13571}.

\bibitem{Shende-microlocal}
Vivek Shende.
\newblock Microlocal category for {W}einstein manifolds via the h-principle.
\newblock {\em Publications of the Research Institute for Mathematical
  Sciences}, 57(3):1041--1048, 2021.

\bibitem{Simpson-noncompact}
Carlos Simpson.
\newblock Harmonic bundles on noncompact curves.
\newblock {\em Journal of the American Mathematical Society}, 3(3):713--770,
  1990.

\bibitem{Simpson-higgs}
Carlos Simpson.
\newblock Higgs bundles and local systems.
\newblock {\em Pub. Math. IH{\'E}S}, 75:5--95, 1992.

\bibitem{Simpson-moduli-1}
Carlos Simpson.
\newblock Moduli of representations of the fundamental group of a smooth
  projective variety {I}.
\newblock {\em Pub. Math. IH{\'E}S}, 79(1):47--129, 1994.

\bibitem{Simpson-moduli-2}
Carlos Simpson.
\newblock Moduli of representations of the fundamental group of a smooth
  projective variety {II}.
\newblock {\em Pub. Math. IH{\'E}S}, 80(1):5--79, 1994.

\bibitem{Taylor}
Jeremy Taylor.
\newblock {W}hittaker coefficients of geometric {E}isenstein series.
\newblock {\em arXiv:2210.04420}.

\bibitem{Thaddeus-parabolic-higgs}
Michael Thaddeus.
\newblock Variation of moduli of parabolic {H}iggs bundles.
\newblock {\em J. reine. agnew. Math.}, 547:1--14, 2002.

\bibitem{Yokogawa-parabolic-higgs}
K{\^o}ji Yokogawa.
\newblock Compactification of moduli of parabolic sheaves and moduli of
  parabolic {H}iggs sheaves.
\newblock {\em Journal of Mathematics of Kyoto University}, 33(2):451--504,
  1993.

\bibitem{Yokogawa-deformation}
K{\^o}ji Yokogawa.
\newblock Infinitesimal deformation of parabolic {H}iggs sheaves.
\newblock {\em International Journal of Mathematics}, 6(1):125, 1995.

\end{thebibliography}

\end{document}